\documentclass{amsart}

\usepackage{amsmath}
\usepackage[psamsfonts]{amssymb}
\usepackage[latin1]{inputenc}
\usepackage{pstricks,pst-node,pst-plot}

\usepackage{hyperref}
\usepackage{diagbox}

\usepackage{graphicx}
\usepackage{graphics}
\usepackage{pgfplots}
\tikzset{ font={\fontsize{9pt}{12}\selectfont}}

\newtheorem{theorem}{\bf Theorem}
\newtheorem{proposition}[theorem]{\bf Proposition}
\newtheorem{lemma}[theorem]{\bf Lemma}
\newtheorem{corollary}[theorem]{\bf Corollary}

\theoremstyle{remark}

\def\cal{\mathcal}
\def\ssm{{\smallsetminus}}
\def\and{{\quad\text{and}\quad}}
\def\with{{\quad\text{with}\quad}}

\def\eps{{\varepsilon}}

\def\bs{\boldsymbol}

\def\N{{\mathbb N}}
\def\Z{{\mathbb Z}}

\def\C{{\mathbb C}}

\def\R{{\mathbb R}}
\def\TT{{\mathbb T}}

\def\Sp{{\widehat{\TT}}}







\def\qf{{\cal Q}_f}




\def\D{{\mathrm D}}
\def\T{{\rm T}}
\def\d{{\rm d}}
\def\dz{{{\rm d}z}}
\def\dw{{{\rm d}w}}


\def\re{{\rm Re}}
\def\im{{\rm Im}}
\def\deg{{\rm deg}}
\def\card{{\rm card}}
\def\ord{{\rm ord}}
\def\id{{\rm id}}
\def\res{{\rm residue}}

\def\Quad{{\cal Q}}
\def\vf{{{\cal V}_f}}
\def\cf{{{\cal C}_f}}
\def\tf{{{\mathcal T}_f}}
\def\qf{{\Quad_f}}
\def\sf{{{\cal S}_f}}

\thanks{Supported in part by the ANR grant Lambda ANR-13-BS01-0002 and by FRPDF allotment 2018-19 of Presidency University}

\subjclass{}
\begin{author}[K.~Banerjee]{Kuntal Banerjee}
\email{kbanerjee.maths@presiuniv.ac.in}
\address{ %
  Presidency University\\
86/1 College Street, Kolkata - 700073\\
West Bengal\\
India  }
\end{author}

\begin{author}[X.~Buff]{Xavier Buff}
\email{xavier.buff@math.univ-toulouse.fr}
\address{ %
  Institut de Mathématiques de Toulouse\\
   UMR5219\\ Université de Toulouse, CNRS, UPS\\ F-31062 Toulouse Cedex 9\\ France }
\end{author}

\begin{author}[J.~Canela]{Jordi Canela}
\email{jordi.canela@unir.net}
\address{ %
  Escuela Superior de Ingenier\'ia y Tecnolog\'ia\\
 Universidad Internacional de la Rioja\\
  Av.\ de la Paz, 137  \\
  26006 Logro\~no \\
  Spain }
\end{author}

\begin{author}[A. Epstein]{Adam Epstein}
\email{a.l.epstein@warwick.ac.uk}
\address{ %
  Mathematics Institute\\
University of Warwick\\ Coventry CV4 7AL - UK}
\end{author}

\title{Tips of Tongues in the Double Standard Family}

\begin{document}

\begin{abstract}
We answer a question raised by Misiurewicz and Rodrigues concerning the family of degree 2 circle maps $F_\lambda:\R/\Z\to \R/\Z$ defined by 
\[F_\lambda(x) := 2x + a+ \frac{b}{\pi} \sin(2\pi x)\with \lambda:=(a,b)\in \R/\Z\times (0,1).\] We prove that if $F_\lambda^{\circ n}-{\rm id}$ has a zero of multiplicity $3$ in $\R/\Z$, then there is a system of local coordinates $(\alpha,\beta):W\to \R^2$ defined in a neighborhood $W$ of $\lambda$, such that $\alpha(\lambda) =\beta(\lambda)=0$ and $F_\mu^{\circ n} - {\rm id}$ has a multiple zero with $\mu\in W$ if and only if $\beta^3(\mu) = \alpha^2(\mu)$. This shows that the tips of  tongues are regular cusps. 
\end{abstract}

\maketitle

\section*{Introduction}

Following Misiurewicz and Rodrigues  \cite{mr1},  we consider the family of circle maps $F_\lambda:\R/\Z\to \R/\Z$ defined by 
\[F_\lambda(x) := 2x + a+\frac{b}{\pi} \sin(2\pi x)\quad \text{with}\quad \lambda:= (a,b)\in \R/\Z\times [0,1].\] 
If  $b\in [0,1/2)$, then $F_\lambda:\R/\Z\to \R/\Z$ is expanding and all periodic cycles of $F_\lambda$ in $\R/\Z$ are repelling. 
If $b\in [1/2,1]$, it may happen that $F_\lambda:\R/\Z\to \R/\Z$ has a non-repelling cycle. 
The multiplier of such a cycle belongs to $[0,1]$. There is at most one such cycle. 
Connected components of the open sets of parameters  $\lambda\in(a,b)\in \R/\Z\times [0,1]$ for which $F_{\lambda}$ has an attracting cycle are called {\em tongues} (see \cite{mr1} and \cite{D}). The period of the attracting cycle remains constant in each tongue, and is called the period of the tongue 

Let $T$ be a tongue of period $p\geq 1$. 
The boundary of $T$ consists of two smooth curves which are graphs with respect to $b$ and intersect tangentially at the tip $\lambda_T\in \R/\Z\times (0,1)$ (see \cite{mr1, mr2} and Figure~\ref{fig:tongues}). If $\lambda\in\partial T$ then $F_{\lambda}$ has a cycle of period $p$ and multiplier 1. On the one hand, if $\lambda\in\partial T\ssm \{\lambda_T\}$, then points of the cycle are double zeros of $F_\lambda^{\circ p}-\id$. On the other hand, points of the cycle are triple zeros of $F_{\lambda_T}^{\circ p}-\id$.

\begin{figure}[hbt!]
 \begin{tikzpicture}
    \begin{axis}[width=300pt, axis equal image, scale only axis,  enlargelimits=false, axis on top,
    xtick={0,0.25,0.5,0.75,1}, ytick={0.5, 0.75, 1}]
      \addplot graphics[xmin=0,xmax=1,ymin=0.5,ymax=1] {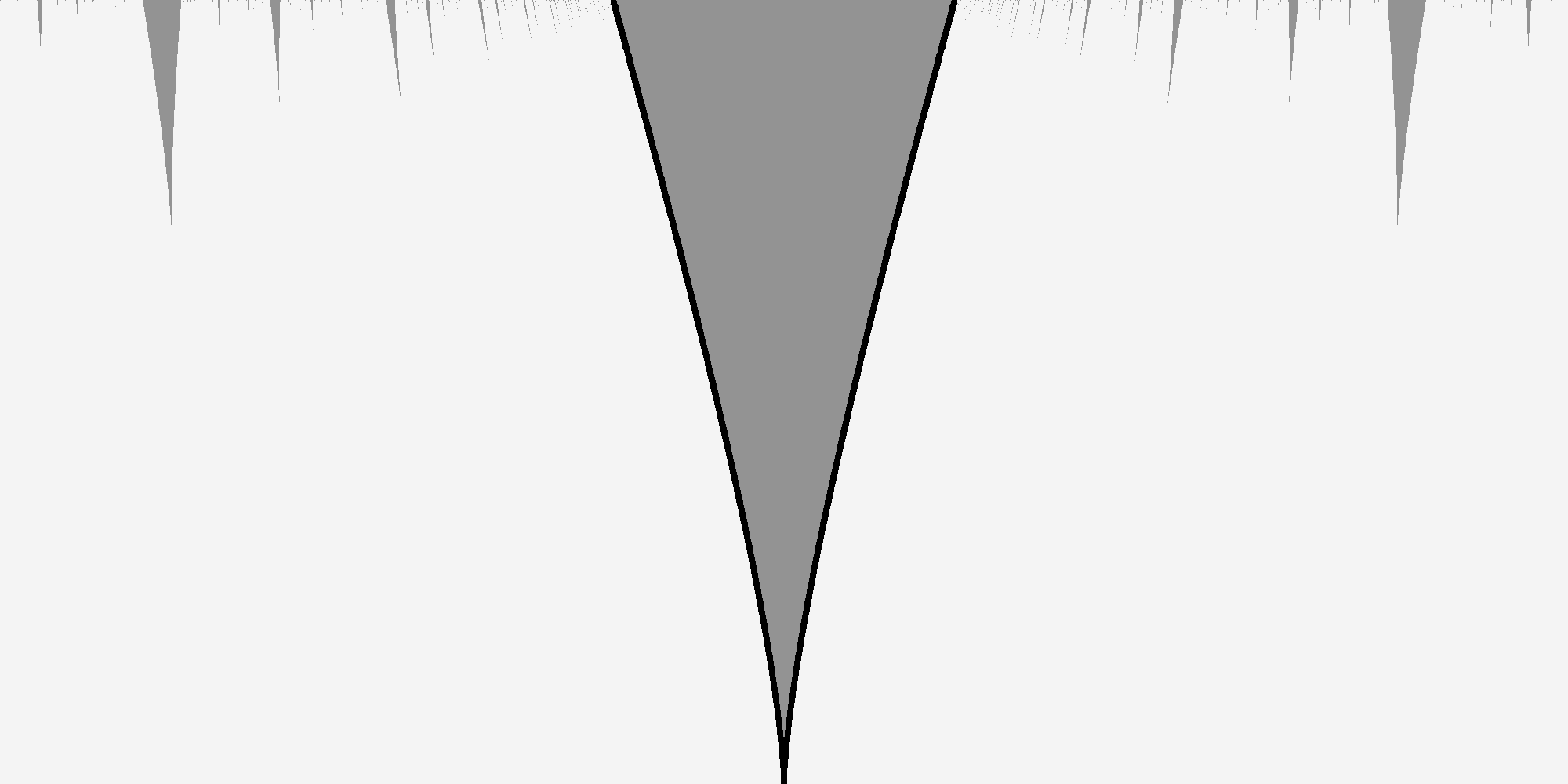};
    \end{axis}
  \end{tikzpicture}
\caption{The tongues of the family $F_{\lambda}$. The horizontal axis corresponds to the parameter $a$ and the vertical axis to $b$. We draw in black the boundary of the tongue of period 1. \label{fig:tongues}}
\end{figure}
 
 There is a unique tongue of period $1$. Misiurewicz and Rodrigues \cite{mr1} proved that the order of contact of its two boundary curves 
 at the tip is $1/2$. In  \cite{mr2} they asked whether this property holds for all tongues of the family $F_{\lambda}$. In this article, we answer positively to this question. More precisely, we prove that near the tip of any tongue, the two boundary curves form an ordinary cusp. 
 
%
%
%

\begin{theorem}\label{theo:main}
Assume $F_\lambda^{\circ n}-{\rm id}$ has a zero of multiplicity $3$ in $\R/\Z$. Then there is a system of local coordinates $(\alpha,\beta):W\to \R^2$ defined a neighborhood $W$ of $\lambda$ in $\R/\Z\times (0,1)$, such that $\alpha(\lambda) =\beta(\lambda) = 0$  and $F_\mu^{\circ n} - {\rm id}$ has a multiple zero with $\mu\in W$ if and only if $\beta^3(\mu) = \alpha^2(\mu)$. 
\end{theorem}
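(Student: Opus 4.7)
Set $G(x,\mu) := F_\mu^{\circ n}(x) - x$, jointly real-analytic in $x\in\R/\Z$ and $\mu=(a,b)$. Let $x_0$ be the given triple zero, so that
$$G(x_0,\lambda) = \partial_x G(x_0,\lambda) = \partial_x^2 G(x_0,\lambda) = 0, \qquad \partial_x^3 G(x_0,\lambda) \neq 0.$$
The real-analytic Weierstrass preparation theorem yields a nonvanishing analytic unit $U$ and analytic functions $\phi,c_0,c_1,c_2$ with $\phi(\lambda)=x_0$ and $c_i(\lambda)=0$, such that near $(x_0,\lambda)$
$$G(x,\mu) = U(x,\mu)\,\bigl[(x-\phi(\mu))^3 + c_2(\mu)(x-\phi(\mu))^2 + c_1(\mu)(x-\phi(\mu)) + c_0(\mu)\bigr].$$
A Tschirnhaus translation in $x$ eliminates the quadratic coefficient and puts the bracketed factor in depressed form
$$P(\xi,\mu) = \xi^3 + p(\mu)\,\xi + q(\mu), \qquad p(\lambda)=q(\lambda)=0.$$
Zeros of $G(\cdot,\mu)$ close to $x_0$ correspond bijectively and with equal multiplicity to roots of $P(\cdot,\mu)$.

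\textbf{Discriminant and the choice of $(\alpha,\beta)$.} The cubic $\xi^3 + p\xi + q$ has a multiple root if and only if $4p^3 + 27q^2 = 0$. Setting
$$\beta(\mu) := -p(\mu) \and \alpha(\mu) := \tfrac{\sqrt{27}}{2}\,q(\mu),$$
this reads $\beta^3(\mu) = \alpha^2(\mu)$, with $\alpha(\lambda)=\beta(\lambda)=0$. The theorem therefore reduces to showing that $(\alpha,\beta)$, equivalently $(p,q)$, is a local diffeomorphism of some neighborhood $W$ of $\lambda$ onto a neighborhood of the origin in $\R^2$.

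\textbf{Transversality: the main obstacle.} What remains is the non-vanishing of the Jacobian
$$\det\begin{pmatrix}\partial_a p & \partial_b p \\ \partial_a q & \partial_b q\end{pmatrix}\bigg|_{\mu=\lambda} \neq 0,$$
i.e., versality of the two-parameter unfolding $\{F_\lambda\}$ at the codimension-two parabolic germ. I expect this to be the heart of the proof. My plan is to complexify: via $z=e^{2\pi i x}$ the circle map $F_\lambda$ lifts to the holomorphic map $\widehat F_\lambda(z) = z^2 \exp(2\pi i a + b(z - 1/z))$ on $\C^*$, whose parabolic cycle is amenable to the infinitesimal Thurston / quadratic-differential technique developed by Epstein. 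Given a putative tangent vector $v \in T_\lambda(\R/\Z\times(0,1))$ in the kernel of $\D(p,q)$, the deformation $\mu_t := \lambda + tv$ preserves the triple-zero condition to first order; one then associates to the parabolic cycle of $\widehat F_\lambda^{\circ n}$ a pair of invariant meromorphic quadratic differentials with controlled pole structure, and a residue pairing against $v$, exploiting the explicit form of $\widehat F_\lambda$ and the fact that its only critical orbits are those of the points $\pm 1/4 \pmod 1$, yields enough independent constraints to force $v=0$. The delicate point is exhibiting two independent quadratic-differential functionals, one sensitive to the $a$-direction and one to the $b$-direction; any proof of transversality must use something genuinely specific to the double standard family beyond the dimension count.
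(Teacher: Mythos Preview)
Your reduction via Weierstrass preparation and the discriminant is correct and matches the paper's \S\ref{sec:coords} almost exactly (the paper keeps the full cubic $A+B\zeta+C\zeta^2+\zeta^3$ rather than depressing it, but this is cosmetic: at $\lambda$ one has $\d\alpha=\tfrac12\d A$ and $\d\beta=-\tfrac13\d B$, so the Jacobian condition is the same).

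The gap is that you have not proved the transversality; you have only stated a plan, and parts of that plan are inaccurate. First, the critical points of $F_\lambda$ are not at $\pm 1/4$: for $b\in(0,1)$ the equation $2+2b\cos(2\pi x)=0$ has no solution in $\R/\Z$, and the two critical points lie in $\TT\setminus(\R/\Z)$ at $w=(-1\pm\sqrt{1-b^2})/b$. Second, and more structurally, the quadratic differentials are not ``one sensitive to $a$, one sensitive to $b$''; rather, the paper attaches to each of the two coefficients $A$ and $B$ a quadratic differential $q_A$, $q_B$ whose polar part along the parabolic cycle $\langle x\rangle$ is prescribed ($(\d\zeta_k)^2/\zeta_k$ and $(\d\zeta_k)^2/\zeta_k^2$ respectively), and then shows that $\d A(v)=\langle q_A,\vartheta_v\rangle_{\langle x\rangle}$ and $\d B(v)=\langle q_B,\vartheta_v\rangle_{\langle x\rangle}$ for the deformation vector field $\vartheta_v$ associated to $v\in\T_\lambda\Lambda$.

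The actual proof of independence then proceeds by (i) showing the image of $v\mapsto\vartheta_v$ together with $\tau-f^*\tau$ spans a three-dimensional space $\tf$ of meromorphic vector fields on $\Sp$ with specific pole/zero constraints; (ii) transferring the residue pairing from the cycle to the critical/singular set $Y$ via an explicit isomorphism $\Xi_f:\tf\to\T Y$; (iii) proving that $\nabla_f=\id-f_*$ is injective on $\qf=\mathrm{Vect}(q_A,q_B)$ by an $L^1$-contraction argument adapted to the non-integrable double-pole case; and (iv) concluding that a nontrivial relation $c_Aq_A+c_Bq_B$ annihilated by all $\vartheta\in\tf$ would force $\nabla_f q=0$, hence $q=0$. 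Each of these steps is specific and none is supplied in your proposal. In particular, step (iii) requires a careful estimate near the parabolic cycle (the quadratic differential $q_B$ is not integrable), and step (iv) uses that a nonzero meromorphic quadratic differential on $\Sp$ has at least four poles counted with multiplicity. Without these ingredients your argument is a correct outline of the strategy but not a proof.
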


Our proof relies on a transversality result due to Adam Epstein for families of finite type analytic maps, which itself relies on an injectivity result of a linear map acting on an appropriate space of quadratic differentials.  In \S\ref{sec:finitetype}, we prove that the maps $F_\lambda$ are finite type analytic maps. In \S\ref{sec:coords}, we define the functions $\alpha$ and $\beta$. In \S\ref{sec:der}, we identify the derivatives of those functions at $\lambda$. 
In \S\ref{sec:inj}, we state and prove the injectivity result. 
In \S\ref{sec:trans}, we prove that $(\alpha,\beta)$ is a system of local coordinates. 

Some classical results on quadratic differentials are collected in Appendix \ref{sec:qd}. 

\section*{Notation}

If $U$ is a complex manifold, we denote by $\T U$ the tangent bundle of $U$ and for $z\in U$, we denote by $\T_z U$ the tangent space to $U$ at $z$. 
If $\phi:U\to \C$ is a holomorphic function, we denote by $\d\phi:\T U\to \C$ the exterior derivative of $\phi$ (this is a holomorphic $1$-form on $U$). 
If $F:U\to V$ is a holomorphic map between complex manifolds $U$ and $V$, we denote by  $\D F:\T U\to \T V$ the bundle map $\T_z U\ni v\mapsto \D_zF(v)\in \T_{F(z)}V$. 

Assume $f:U\to V$ is a holomorphic map between Riemann surfaces. If $\omega$ is a holomorphic $1$-form on $V$, then $f^*\omega:= \omega \circ \D f$ is a holomorphic $1$-form on $U$. If $\vartheta$ is a holomorphic vector field on $U$, then there is a meromorphic vector field $f^*\vartheta$ on $U$ satisfying $\D f\circ f^*\vartheta = \vartheta\circ f$.

We will consider various holomorphic families $t\mapsto \gamma_t$ defined near $0$ in $\C$. We will employ the notation 
\[\gamma:=\gamma_0\quad\text{and}\quad \dot\gamma:=\frac{d\gamma_t}{dt}\Big|_{t=0}.\]

\section{Finite type analytic maps\label{sec:finitetype}}

The notion of finite type analytic maps originates in \cite{E}. 
Let $f:{\mathbb X}\rightarrow {\mathbb Y}$ be an analytic map of complex 1-manifolds, possibly disconnected. 
An open set $V\subseteq {\mathbb Y}$ is {\em evenly covered} by $f$ if $f_{|U}:U\rightarrow V$
is a homeomorphism for each component $U$ of $f^{-1}(V)$; we say that $y\in
{\mathbb Y}$ is a {\em regular value} for $f$ if some neighborhood $V\ni y$ is evenly
covered, and a {\em singular value} for $f$ otherwise. Note that the set 
$\sf$ of singular values is closed. Recall that $x\in {\mathbb X}$ is a {\em critical
point} if the derivative of $f$ at $x$ vanishes, and then $f(x)\in {\mathbb Y}$ is a 
{\em critical value}. We say that $y\in {\mathbb Y}$ is an {\em asymptotic value} if $f$
approaches $y$ along some path tending to infinity relative to ${\mathbb X}$. It
follows from elementary covering space theory that the critical values
together with the asymptotic values form a dense subset of $\sf$. In
particular, every isolated point of $\sf$ is a critical or asymptotic value.

An analytic map $f:{\mathbb X}\rightarrow {\mathbb Y}$ of complex $1$-manifolds is of
{\em finite type} if 
\begin{itemize}
\item $f$ is nowhere locally constant, 
\item $f$ has no isolated removable singularities, 
\item ${\mathbb Y}$ is a finite union of compact Riemann surfaces, and 
\item $\sf$ is finite. 
\end{itemize}

If ${\mathbb Y}$ is connected, we define $\deg f$ as the finite or infinite
number ${\rm \card}\bigl(f^{-1}(y)\bigr)$ which is independent of $y\in {\mathbb Y}\ssm \sf$.
When $f:{\mathbb X}\to {\mathbb Y}$ is a finite type analytic map with ${\mathbb X}\subseteq {\mathbb Y}$, we say that $f$ is a finite type analytic map on ${\mathbb Y}$. 

We first prove that the maps $F_\lambda$ extend to finite type analytic maps. 

\subsection{Preliminaries}

Set $\TT:=\C/\Z$ and  $\Lambda:=\TT\times \C^*$. Let $F:\Lambda\times \TT\to \TT$ be the holomorphic map defined by 
\[F(\lambda,z) = 2z + a + \frac{b}{\pi} \sin(2\pi z)\quad \text{with}\quad \lambda:=(a,b)\in \Lambda.\] 
For $\lambda \in \Lambda$, let $F_\lambda:\TT\to \TT$ be the holomorphic map defined by 
\[F_\lambda(z) := F(\lambda,z).\]

It will be convenient to consider the global coordinate $\TT\ni z\mapsto w:={\rm e}^{2\pi {\rm i}z}\in \C^*$. Note that $w:\TT\to \C^*$ is an isomorphism. Thus, adding two points denoted 
$z=+{\rm i}\infty$ (or $w=0$) and $z=-{\rm i}\infty$ (or $w=\infty$), $\TT$ may be compactified into a Riemann surface 
$\Sp$ isomorphic to the Riemann sphere. 

We will prove that for $\lambda\in \Lambda$, the map $F_\lambda:\TT\to \Sp$ is a finite type analytic map on $\Sp$. 

\subsection{The singular set}

Fix $\lambda := (a,b)\in \Lambda$ and set $f:=F_{a,b}:\TT\to \Sp$. 
Note that 
\[w\circ f= {\rm e}^{2\pi {\rm i} a} w^2 {\rm e}^{b(w-1/w)}\]
and
\[f^*(\dw) = {\rm e}^{2\pi {\rm i} a} {\rm e}^{b(w-1/w)}(bw^2+2w+b)\ \dw.\]
In particular, $f$ has two critical points counting multiplicities: the solutions of $bw^2+2w+b=0$, i.e., the points $c^±\in \T$ such that 
\[w(c^±) = \frac{-1±\sqrt{1-b^2}}{b}.\] 
If $b\neq 1$, those are simple critical points of $f$. We denote by $\cf\:=\{c^+,c^-\}\subset \TT$ the set of critical points of $f$ and
by $\vf:=f(\cf)\subset \TT$ the set of critical values of $f$.

\begin{lemma}
The singular set $\sf$ is equal to $\vf\cup \{±{\rm i}\infty\}$. 
\end{lemma}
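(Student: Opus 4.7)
The plan is to invoke the general fact stated in the preamble: $\sf$ is closed and contains the union of the critical values and the asymptotic values as a dense subset. The critical values have already been read off from $f^*(\dw)$ and form the finite set $\vf=\{f(c^+),f(c^-)\}\subset\TT$, so it remains to identify the asymptotic values of $f$. I would work in the $w$-coordinate, where $f$ is given by $g(w):=e^{2\pi\mathrm{i}a}w^2 e^{b(w-1/w)}$ on $\C^*$, with essential singularities exactly at $w=0$ and $w=\infty$ (corresponding to $z=\pm\mathrm{i}\infty$). Asymptotic values of $f$ are precisely the asymptotic values of $g$ at one of these two punctures.

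To show $\pm\mathrm{i}\infty$ are asymptotic values, I would exhibit explicit rays: since $b\in\C^*$, pick $\theta_0$ with $\re(be^{-\mathrm{i}\theta_0})>0$; then along $w(r)=re^{\mathrm{i}\theta_0}$ one has $\re(b/w)=\tfrac{1}{r}\re(be^{-\mathrm{i}\theta_0})\to+\infty$ as $r\to 0^+$, so $|g(w)|=r^2 e^{\re(bw)-\re(b/w)}\to 0$ and $f\to+\mathrm{i}\infty$. Reversing the sign of $\re(be^{-\mathrm{i}\theta_0})$, and using the analogous rays near $w=\infty$, yields the three other combinations and shows that both $\pm\mathrm{i}\infty$ are asymptotic values at each end of $\TT$.

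To rule out an asymptotic value $c\in\C^*$ of $g$ at $w=0$, suppose there is a continuous path $w(t)\to 0$ with $g(w(t))\to c$. The identity $\log|g(w)|=2\log|w|+\re(bw)-\re(b/w)$ combined with $\re(bw)\to 0$ forces $\re(b/w(t))=2\log|w(t)|+O(1)\to-\infty$; writing $b/w=(|b|/|w|)e^{\mathrm{i}(\arg b-\arg w)}$, this in turn forces $\cos(\arg b-\arg w(t))\to 0$ and hence the continuous lift of $\arg w(t)$ to converge to a finite limit. On the other hand, continuous convergence $g(w(t))\to c\in\C^*$ forces the continuous lift of $\arg g(w(t))=2\pi a+2\arg w(t)+\im(bw(t))-\im(b/w(t))+\text{const}$ to a finite limit; with $\arg w(t)$ and $\im(bw(t))$ now bounded, this requires $\im(b/w(t))$ to converge in $\R$. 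But $\im(b/w(t))=(|b|/|w(t)|)\sin(\arg b-\arg w(t))$ with $\sin\to\pm 1$ and $|b|/|w(t)|\to+\infty$, so $|\im(b/w(t))|\to\infty$, a contradiction. The symmetric argument at $w=\infty$ completes the classification, and since $\vf\cup\{\pm\mathrm{i}\infty\}$ is finite hence closed, the density statement gives $\sf=\vf\cup\{\pm\mathrm{i}\infty\}$.

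The main obstacle is this non-existence step. By Casorati--Weierstrass, every point of $\widehat\C$ is a sequential cluster value of $g$ at each essential singularity, so finite asymptotic values cannot be ruled out by sequential considerations alone; the argument must exploit the strictly stronger notion of asymptotic convergence along a continuous path, which forces the continuous lift of $\arg g$ along the path to stabilize. This here fails because the imaginary part of $b/w$ overwhelms the logarithmic-size real part that is forced by convergence of $|g|$.
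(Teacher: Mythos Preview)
Your argument is correct. The paper's proof follows the same outline --- identify $\vf$, note that $\pm{\rm i}\infty$ are singular, and rule out any asymptotic value in $\TT$ --- but executes the last step differently. Rather than staying in the $w$-coordinate, the paper passes to the universal cover $\bs\pi:\C\to\TT$ and uses that convergence of $f\circ\gamma$ in $\TT$ forces convergence of the lift $\tilde f\circ\Gamma$ in $\C$, where $\tilde f(Z)=2Z+A+\tfrac{b}{\pi}\sin(2\pi Z)$; it then splits into the cases where $\re\Gamma$ is bounded along a subsequence (so the exponential term in $\sin(2\pi\Gamma)$ dominates and $|\tilde f\circ\Gamma|\to\infty$) and where $\re\Gamma\to\pm\infty$ (so one can sample at integer real parts). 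Your route avoids this case split: from the modulus equation you extract that $\re(b/w)$ is only logarithmic in $|w|$ while $|b/w|$ is of order $1/|w|$, so $\cos(\arg b-\arg w)\to 0$ and, by continuity of the lifted argument, $\arg w$ must actually converge; the argument equation then yields the contradiction directly via $\im(b/w)$. This is a little more streamlined than the paper's dichotomy, at the cost of a slightly more delicate continuity argument for $\arg w$. Finally, the paper simply remarks that $\pm{\rm i}\infty$ are \emph{omitted} values (since $g$ maps $\C^*$ to $\C^*$), which already puts them in $\sf$; your explicit asymptotic rays establish the stronger fact that they are asymptotic values, which is correct but more than is needed.
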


\begin{proof}
We already identified the set of critical values of $f$. Note that $±{\rm i}\infty$ are singular values since those points are omitted values. It is therefore enough to show that $f$ does not have any asymptotic value in $\TT$. 

If $v\in\TT$ is an asymptotic value, then there exists a curve $\gamma:[0,1)\rightarrow \TT$, such that $\gamma(t)\to ±{\rm i}\infty$ and $f\circ \gamma(t)\to v$ as $t\to 1$. We assume that $\gamma(t)\rightarrow + {\rm i}\infty$. The proof for the case $\gamma(t)\rightarrow - {\rm i}\infty$ is analogous.  

It is convenient to lift via the canonical covering $\bs \pi:\C\to \TT:=\C/\Z$. Choose $A\in \C$  such that $\bs \pi(A) = a$. Let $\tilde f:\C\to \C$ be defined by 
\[ \tilde f(Z)=2Z+ A+\frac{b}{\pi}\sin(2\pi Z)\quad \text{so that}\quad \bs \pi \circ \tilde f = f\circ \bs\pi.\]
Let $\Gamma:[0,1)\to \C$ be a lift of $\gamma:[0,1)\to \TT$, i.e., satisfying $\bs \pi \circ \Gamma = \gamma$. 
Then, $\tilde f\circ \Gamma$ is a lift of $f\circ \gamma$, thus $\tilde f\circ \Gamma(t)$ converges in $\C$ as $t\to 1$. 

Set $X:=\re(\Gamma):[0,1)\to \R$ and $Y:=\im(\Gamma):[0,1)\to \R$. Then, 
\[\tilde f\circ \Gamma=2(X+{\rm i}Y)+A+\frac{b}{\pi}\sin(2\pi\Gamma),\quad  \sin(2\pi\Gamma)=\frac{{\rm e}^{-2\pi Y}{\rm e}^{2\pi {\rm i}X}-{\rm e}^{2\pi Y}{\rm e}^{-2\pi {\rm i}X}}{2{\rm i}}\]
and $Y(t)\to +\infty$ as $t\to 1$. 
It follows that as $t\to 1$,  
\[\tilde f\circ \Gamma(t)\sim 2X(t)  -\frac{b}{4\pi {\rm i}}{\rm e}^{2\pi Y(t)}{\rm e}^{-2\pi {\rm i}X(t)}.\]
We can distinguish 2 cases.
 If there exists a sequence $\{t_k\}_{k\in\N}$ converging to $1$  with $\bigl\{X(t_k)\bigr\}_{k\in \N}$ bounded, then 
 \[\bigl|\tilde f\circ \Gamma(t_k)\bigr|\sim \frac{b}{4\pi}{\rm e}^{2\pi Y(t_k)}\underset{k\to +\infty}\longrightarrow +\infty.\]
Otherwise, $X(t)\rightarrow \pm\infty$ as  $t\rightarrow 1$ and there exists a sequence $\{t_k\}_{k\in\N}$ converging to $1$ with $X(t_k)\in \Z$ for all $k\in \N$, so that 
\[\tilde f\circ \Gamma(t_k) \sim 2X(t_k)+{\rm i}\frac{b}{4\pi} {\rm e}^{2\pi Y(t_k)}\underset{k\to +\infty}\longrightarrow \infty.\]
In both cases,  the sequence $\bigl\{\tilde f\circ \Gamma(t_k)\bigr\}_{k\in \N}$ cannot converge in $\C$. 
\end{proof}

\begin{corollary}\label{coro:covering}
The map $f:\TT\to \Sp$ is a finite type analytic map on $\Sp$. More precisely, $f : \TT\ssm f^{-1}(\vf)\to \TT\ssm \vf$ is a covering map. 
\end{corollary}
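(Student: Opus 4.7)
The plan is to verify the four defining properties of a finite type analytic map for $f:\TT\to\Sp$, and then deduce the covering assertion. That $f$ is nowhere locally constant is immediate because $f$ is holomorphic and non-constant; that $\Sp$ is a finite union of compact Riemann surfaces is immediate since $\Sp$ is the Riemann sphere; and $\sf=\vf\cup\{\pm{\rm i}\infty\}$ has at most four elements by the preceding lemma.

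The only remaining condition is that $f$ have no isolated removable singularity at either point of $\Sp\ssm\TT$. From
\[w\circ f={\rm e}^{2\pi {\rm i}a}\,w^2\,{\rm e}^{b(w-1/w)},\]
the factor ${\rm e}^{-b/w}$ has an essential singularity at $w=0$ (recall $b\in\C^*$), so $w\circ f$ does as well and is in particular not holomorphically extendable through $w=0$. The symmetric substitution $w\leftrightarrow 1/w$ exchanges the two ends of $\TT$ and handles the point $w=\infty$. Hence $f:\TT\to\Sp$ is a finite type analytic map on $\Sp$.

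For the covering assertion, write $g$ for the restriction of $f$ to $\TT\ssm f^{-1}(\vf)$. Every critical point of $f$ is mapped into $\vf$, so $g$ has no critical points and is a local biholomorphism onto $\TT\ssm\vf$. To promote $g$ to a covering map I would invoke the standard path-lifting criterion: given a continuous path $\gamma:[0,1]\to\TT\ssm\vf$ together with a chosen preimage of $\gamma(0)$, local biholomorphy of $g$ continues the lift on a maximal half-open subinterval. Any failure to extend at the right endpoint would force the lift either to accumulate on $f^{-1}(\vf)$, in which case $\gamma$ would accumulate on $\vf$, or to escape to one of the ends $\pm{\rm i}\infty$ of $\TT$, in which case $\gamma$ would converge to an asymptotic value of $f$ belonging to $\TT$. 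Both scenarios are excluded, the first by hypothesis and the second by the previous lemma. Thus lifts exist on $[0,1]$, and a surjective local homeomorphism between locally path-connected Hausdorff spaces with the path-lifting property is a covering map. The only delicate step is this path-lifting argument, and it is dispatched entirely by the absence of asymptotic values in $\TT$ established above.
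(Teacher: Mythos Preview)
Your proof is correct. The verification of the four finite-type conditions matches the paper's implicit reasoning (the paper states the corollary without proof, treating it as immediate from the preceding lemma); your essential-singularity argument for the ``no removable singularities'' clause is a welcome elaboration.

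For the covering assertion, however, you work harder than necessary. The paper's route is simply to read off the conclusion from the definitions already in place: since $\sf=\vf\cup\{\pm{\rm i}\infty\}$, the set $\TT\ssm\vf$ equals $\Sp\ssm\sf$ and therefore consists entirely of \emph{regular values}. By the paper's own definition, a regular value is one admitting an evenly covered neighborhood, which is exactly the local triviality condition for a covering map. So $f:\TT\ssm f^{-1}(\vf)\to\TT\ssm\vf$ is a covering map immediately, with no appeal to path-lifting. Your path-lifting argument is valid and in effect \emph{re-proves} that the absence of critical and asymptotic values in $\TT\ssm\vf$ forces even covering; what it buys is independence from the particular definition of ``singular value'' the paper adopted, at the cost of extra work (and a tacit appeal to surjectivity, which does hold here by Picard's theorem at the essential singularities, or by connectedness of $\TT\ssm\vf$ combined with your own lifting argument).
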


\section{Splitting triple zeros\label{sec:coords}}

In the remainder of the article, we fix a parameter $\lambda:=(a,b)\in \R/\Z\times (0,1)$ such that $F_{\lambda}^{\circ n}-\id$ has a triple zero $x\in \R/\Z$. 
We set $f:=F_{\lambda}:\TT\to \TT$. The point $x$ is  periodic for $f$ with period $p$ dividing $n$. For $k\geq 0$, we set $x_k:= f^{\circ k}(x)$ and we denote by $\left<x\right>:=\{x_0,x_1,\ldots,x_{p-1}\}$ the cycle of $x$. 

Since $f:\R/\Z\to \R/\Z$ preserves the orientation, the multiplier of $f^{\circ p}$ at $x$ is necessarily $1$ and there is a local coordinate 
$\zeta:(\TT,x)\to (\C,0)$ vanishing at $x$ satisfying 
\begin{equation}\label{eq:zeta}
\zeta(\bar z) = \bar \zeta(z) \quad\text{and}\quad \zeta\circ f^{\circ p}  = \zeta + \zeta^3 + {\cal O}(\zeta^5).
\end{equation}
According to the Weierstrass Preparation Theorem, there exist a neighborhood $W_1\subset \Lambda$ of $\lambda$, a neighborhood $W_2\subset \C$ of $0$ and analytic functions $A:W_1\to \C$, $B:W_1\to \C$, $C:W_1\to \C$ and $g:W_1\times W_2\to \C$ such that for $\mu\in W_1$, 
\begin{equation}\label{eq:Weierstrass}
\zeta\circ F_\mu^{\circ p} - \zeta = P_\mu(\zeta) \cdot g(\mu,\zeta)
\end{equation}
with 
\begin{equation}\label{eq:ABC}
A(\lambda) = B(\lambda) = C(\lambda)=0,\quad g(\lambda,\zeta) = 1 + {\cal O}(\zeta^2)
\end{equation}
and
\begin{equation}\label{eq:P}
P_\mu(\zeta) := A(\mu) + B(\mu) \zeta + C(\mu) \zeta^2 + \zeta^3.
\end{equation}
The polynomial $P_{\lambda}$ has a zero of multiplicity $3$ at $0$, and as $\mu$ varies in $W_1$, this zero splits in
three zeros (counting multiplicities) of $P_\mu$. When $\mu\in \R/\Z\times (0,1)$, the map $F_\mu^{\circ n} - \id$ commutes with 
$z\mapsto \bar z$, so that the polynomial $P_\mu$ has real coefficients. For such a parameter $\mu$, a multiple zero of $P_\mu$ is necessarily real. 

For any $\mu\in \Lambda$, the function $\zeta\circ F_\mu^{\circ p} - \zeta$ vanishes at the periodic points of $F_\mu$ of period dividing $p$, and so, divides $\zeta\circ F_\mu^{\circ n} - \zeta$ which vanishes at the periodic points of period dividing $n$. In addition, if $n=mp$, then 
$\zeta\circ f^{\circ n} - \zeta = m \zeta^3 + {\cal O}(\zeta^5)$. 
So, there is an analytic function $h:W_1\times W_2\to \C$ such that  for $\mu\in W_1$, 
\[\zeta\circ F_\mu^{\circ n} - \zeta = P_\mu(\zeta) \cdot h(\mu,\zeta)\quad \text{with}\quad h(\lambda,\zeta) = m+{\cal O}(\zeta^2)
\]
Since $f$ only has two critical points in $\TT$, it has a single non-repelling cycle, that is, the cycle $\left<x\right>$.  
All other cycles of $f$ in $\R/\Z$ are repelling. Shrinking $W_1$ is necessary, it follows that  for 
$\mu\in W_1$, the function $\zeta\circ F_\mu^{\circ n}-\zeta$ has a multiple zero in $\R/\Z$ if and only if the polynomial 
$P_\mu$ has a multiple zero in $\R/\Z$. According to the previous discussion, this is the case if and only if $P_\mu$ has a multiple zero.  

Let $\alpha : W_1\to \C$ and $\beta:W_1\to \C$ be defined by 
\[\alpha :=   \frac{C^3}{27}- \frac{BC}{6}+\frac{A}{2}\quad \text{and}\quad \beta :=  \frac{C^2}{9}-\frac{B}{3}.\]
Then, 
\[{\rm discriminant}(P_\mu) = 108\beta^3(\mu)-108\alpha^2(\mu).\]
So, if $\mu\in W_1$, the polynomial $P_\mu$ has a multiple zero if and only if $\beta^3(\mu)=\alpha^2(\mu)$. 

In order to prove Theorem \ref{theo:main}, it is therefore enough to show that $(\alpha,\beta)$ is a system of local coordinates near $\lambda$. For this purpose, we shall show that the restrictions of $\d \alpha $ and $\d \beta$ to ${\rm T}_{\lambda}\Lambda$ are linearly independent. 
Since $A$, $B$ and $C$ vanish at $\lambda$, 
\[\d \alpha |_{{\rm T}_{\lambda}\Lambda}= \frac{1}{2} \d A|_{{\rm T}_{\lambda}\Lambda}\quad \text{and}\quad 
\d \beta|_{{\rm T}_{\lambda}\Lambda} = -\frac{1}{3} \d B|_{{\rm T}_{\lambda}\Lambda}.\]
It is therefore enough to show that the forms $\d A|_{{\rm T}_{\lambda}\Lambda}$ and $\d B|_{{\rm T}_{\lambda}\Lambda}$ are linearly independent.

\section{Identifying the derivatives\label{sec:der}}

Here, we identify $\d A(v)$ and $\d B(v)$ for $v\in \T_\lambda\Lambda$.
First, to each $v\in \T_\lambda\Lambda$, we shall associate a meromorphic vector field $\vartheta_v$ on $\TT$ having simple poles along $\cf\cup \{±{\rm i}\infty\}$,  such that for all $z\in \TT\ssm \cf$, 
\[\D f\circ \vartheta_v(z) := \D_{\lambda,z}F(v,0).\]
Second, for $k\in [1,p]$, let $\zeta_k:(\TT,x_k)\to (\C,0)$ be the local coordinate vanishing at $x_k$ defined by 
\[\zeta_k := \zeta\circ f^{\circ (p-k)}.\]
Our identification goes as follows. 

\begin{proposition}\label{prop:dAdB}
Let $q_A$ and $q_B$ be quadratic differentials, defined and meromorphic near $\left<x\right>$, such that $q_A-(\d\zeta_k)^2/\zeta_k$ and $q_B - (\d\zeta_k)^2/\zeta_k^2$ are holomorphic at $x_k$ for all $k\in [1,p]$. 
Then, for all $v\in \T_\lambda\Lambda$, 
\[\d A(v) = \sum_{k=1}^p {\rm residue}(q_A\otimes \vartheta_v,x_k)\quad \text{and}\quad \d B(v) =  \sum_{k=1}^p {\rm residue}(q_B\otimes \vartheta_v,x_k).\]
\end{proposition}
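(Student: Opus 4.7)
The plan is to reduce both identities to matching Taylor coefficients in a single power-series expansion. Differentiating the Weierstrass factorization $\zeta\circ F_\mu^{\circ p}(z)-\zeta(z) = P_\mu(\zeta(z))\,g(\mu,\zeta(z))$ along $v\in \T_\lambda\Lambda$, and using that $A,B,C$ vanish at $\lambda$ while $g(\lambda,\zeta) = 1 + {\cal O}(\zeta^2)$, one obtains
\[\dot\phi(z) := \frac{d}{dt}\Big|_{t=0}\zeta\circ F_{\mu_t}^{\circ p}(z) = \d A(v) + \d B(v)\,\zeta(z) + {\cal O}\bigl(\zeta(z)^2\bigr)\]
for $z$ near $x$. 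Hence $\d A(v) = \dot\phi(x)$ and $\d B(v)$ is the derivative of $\dot\phi$ with respect to $\zeta$ at $x$.

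Next I would compute $\dot\phi(z)$ by iterating the chain rule. Writing $y_k(z) := f^{\circ k}(z)$, the recursion $F_\mu^{\circ k}(z) = F_\mu(F_\mu^{\circ(k-1)}(z))$ together with the defining relation $\D f\circ \vartheta_v(y) = \D_{\lambda,y}F(v,0)$ gives by induction
\[\frac{d}{dt}\Big|_{t=0} F_{\mu_t}^{\circ p}(z) = \sum_{k=0}^{p-1}\D_{y_k(z)}f^{\circ(p-k)}\bigl(\vartheta_v(y_k(z))\bigr).\]
Applying $\D_{f^{\circ p}(z)}\zeta$ and using the chain-rule identity $\D_{f^{\circ p}(z)}\zeta\circ \D_{y_k(z)}f^{\circ(p-k)} = \D_{y_k(z)}\zeta_k$, this becomes $\dot\phi(z) = \sum_{k=0}^{p-1}\D_{y_k(z)}\zeta_k\bigl(\vartheta_v(y_k(z))\bigr)$. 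In the chart $\zeta_k$ at $x_k$, write $\vartheta_v = u_k(\zeta_k)\,\partial_{\zeta_k}$ with $u_k$ holomorphic at $0$; this is legitimate because no cycle point is critical for $f$, since the multiplier at $x$ equals $1$, not $0$. The identity $\zeta_k\circ y_k = \zeta\circ f^{\circ p}$ shows that $y_k(z)$ has $\zeta_k$-coordinate $\zeta(z)+\zeta(z)^3+{\cal O}(\zeta(z)^5)$, the same value for every $k$. Therefore $\dot\phi(z) = \sum_{k=0}^{p-1} u_k\bigl(\zeta(z)+{\cal O}(\zeta(z)^3)\bigr)$, and matching with the expansion of the first paragraph yields $\d A(v) = \sum_{k=0}^{p-1}u_k(0)$ and $\d B(v) = \sum_{k=0}^{p-1}u_k'(0)$.

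Finally, in the $\zeta_k$-chart the $1$-forms $q_A\otimes\vartheta_v$ and $q_B\otimes\vartheta_v$ agree, modulo holomorphic $1$-forms, with $\bigl(u_k(\zeta_k)/\zeta_k\bigr)\,\d\zeta_k$ and $\bigl(u_k(\zeta_k)/\zeta_k^2\bigr)\,\d\zeta_k$, so their residues at $x_k$ are $u_k(0)$ and $u_k'(0)$ respectively. To pass from the range $k=0,\dots,p-1$ to the range $k=1,\dots,p$ I would use the relation $\zeta_0 = \zeta\circ f^{\circ p} = \zeta_p+\zeta_p^3+{\cal O}(\zeta_p^5)$ between the two charts at $x_0=x_p$; a direct coordinate change shows $u_0(0)=u_p(0)$ and $u_0'(0)=u_p'(0)$.

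The part I expect to be most delicate is the chain-rule bookkeeping through the $p$ iterates while keeping track of the distinct charts $\zeta_k$ at each cycle point; once the cancellation $\zeta_k\circ f^{\circ k}=\zeta\circ f^{\circ p}$ is isolated, the residue identification is essentially mechanical.
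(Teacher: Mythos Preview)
Your proposal is correct and follows essentially the same route as the paper. The paper packages the chain rule as a single vector field $\vartheta_v^p=\sum_{k=0}^{p-1}(f^{\circ k})^*\vartheta_v$ at $x$, computes $\d A(v)$ and $\d B(v)$ as residues of $(\d\zeta)^2/\zeta^j$ against $\vartheta_v^p$ there, and then uses an invariance lemma (that $f^*\bigl((\d\zeta_{k+1})^2/\zeta_{k+1}^j\bigr)-(\d\zeta_k)^2/\zeta_k^j$ is holomorphic) together with the pullback formula for residues to spread the sum over the cycle; you instead unroll the chain rule directly to each cycle point in the charts $\zeta_k$ and read off the Taylor coefficients there, which is the same computation in concrete coordinates rather than in the pullback formalism. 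Your identity $\zeta_k\circ f^{\circ k}=\zeta\circ f^{\circ p}$ is exactly what underlies the paper's invariance lemma, and your index shift $k=0\leftrightarrow k=p$ is correct (and indeed automatic, since both ranges cover the full cycle and $\zeta_0=\zeta_p+{\cal O}(\zeta_p^3)$).
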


 In the remaining parts of this section we prove Proposition~\ref{prop:dAdB}.
 
\subsection{Meromorphic vector fields\label{sec:mvf}}

Assume $v\in \T_\lambda\Lambda$ and $z\in \TT\ssm \cf$. Then, the derivative $\D_z f : \T_z\TT\to \T_{f(z)}\TT$ is an isomorphism and $\D_{\lambda,z}F(v,0)\in \T_{f(z)}\TT$. Let $\vartheta_v$ be the vector field defined on $\TT\ssm \cf$ by 
\[\vartheta_v(z) := (\D_z f)^{-1}\bigl(\D_{\lambda,z}F(v,0)\bigr)\in \T_z\TT.\]

\begin{lemma}
For all $v\in \T_\lambda \Lambda$, the vector field $\vartheta_v$ is holomorphic on $\TT\ssm \cf$, meromorphic on $\Sp$, vanishes at $z=±{\rm i}\infty$ and has at worst simple poles along $\cf$. 
\end{lemma}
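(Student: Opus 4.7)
The plan is to verify the claim by direct inspection in convenient local coordinates on $\Sp$, using that $F$ is given by an explicit trigonometric formula.

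First I would compute the two objects appearing in the definition of $\vartheta_v$. For $v=(\dot a,\dot b)\in \T_\lambda\Lambda$, the partial derivative with respect to the parameter is
\[\D_{\lambda,z}F(v,0) = \dot a + \frac{\dot b}{\pi}\sin(2\pi z),\]
viewed as a vector in $\T_{f(z)}\TT$ under the trivialization by $\partial_z$. The derivative with respect to the dynamical variable is
\[\D_z f = 2+2b\cos(2\pi z).\]
In the coordinate $w=e^{2\pi{\rm i}z}$ this reads $(bw^2+2w+b)/w$, so $\D_z f$ is a bounded analytic function on $\TT$ whose zero set (assuming $b\neq 1$, as is the case for $\lambda\in \R/\Z\times(0,1)$) is $\cf$, with both zeros simple. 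Consequently the ratio
\[\vartheta_v(z)\,\partial_z := \frac{\dot a + \frac{\dot b}{\pi}\sin(2\pi z)}{2+2b\cos(2\pi z)}\,\partial_z\]
is holomorphic on $\TT\ssm \cf$ and has at worst a simple pole at each critical point, establishing the first two assertions.

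Next I would analyze behavior at $\pm{\rm i}\infty$ by passing to the coordinate $w$ and using $\partial_z = 2\pi{\rm i}\,w\,\partial_w$. Substituting $\sin(2\pi z)=(w-w^{-1})/(2{\rm i})$ and $\cos(2\pi z)=(w+w^{-1})/2$ and clearing denominators gives
\[\vartheta_v = \frac{2\pi{\rm i}\,\dot a\, w^2 + \dot b\,w(w^2-1)}{bw^2+2w+b}\,\partial_w.\]
At $w=0$, the denominator equals $b\neq 0$ while the numerator vanishes to order at least one; so $\vartheta_v$ extends holomorphically to $w=0$ with a zero there. Near $w=\infty$, I would substitute $u=1/w$ and $\partial_w = -u^2\,\partial_u$ to rewrite the expression as $-u(\dot b + O(u))/(b + O(u))\,\partial_u$, which again vanishes at $u=0$. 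This shows that $\vartheta_v$ extends meromorphically to $\Sp$ and vanishes at both $\pm{\rm i}\infty$.

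None of the steps is a serious obstacle; the main care needed is in the coordinate change $\partial_z = 2\pi{\rm i}\,w\,\partial_w$ and in verifying the vanishing orders at $w=0,\infty$, which amount to routine algebraic bookkeeping in the explicit formula for $F$.
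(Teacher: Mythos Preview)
Your proof is correct and follows essentially the same approach as the paper: both arguments compute $\vartheta_v$ explicitly as a rational vector field in the coordinate $w$ and read off the order at each point. The only cosmetic difference is that the paper invokes linearity to reduce to the two basis vectors $v_a=\partial/\partial a$ and $v_b=\partial/\partial b$ and computes each separately in the $w$-coordinate from the outset, whereas you carry a general $v=(\dot a,\dot b)$ through the $z$-coordinate first and then convert; your final expression $\vartheta_v=\bigl(2\pi{\rm i}\,\dot a\,w^2+\dot b\,(w^3-w)\bigr)/(bw^2+2w+b)\,\partial_w$ is exactly the linear combination $\dot a\,\vartheta_{v_a}+\dot b\,\vartheta_{v_b}$ of the paper's formulas.
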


\begin{proof}
The map $v\mapsto \vartheta_v$ is linear. So, it is enough to prove the result for $v_a:=\d/\d a$ and $v_b:=\d /\d b$. We have 
\[\vartheta_{v_a} = \frac{2\pi {\rm i} {\rm e}^{2\pi {\rm i} a} w^2 {\rm e}^{b(w-1/w)}}{{\rm e}^{2\pi {\rm i} a} {\rm e}^{b(w-1/w)}(bw^2+2w+b) }\frac{\d}{\dw} = \frac{2\pi {\rm i} w^2}{bw^2+2w+b}\frac{\d}{\dw}\]
and
\[\vartheta_{v_b} = \frac{{\rm e}^{2\pi {\rm i} a} w^2 (w-1/w){\rm e}^{b(w-1/w)}}{{\rm e}^{2\pi {\rm i} a} {\rm e}^{b(w-1/w)}(bw^2+2w+b) }\frac{\d}{\dw} = \frac{w^3-w}{bw^2+2w+b}\frac{\d}{\dw}\]
Those two vector fields have the required properties. 
\end{proof}

Denote by $\tf$ the space of meromorphic vector fields on $\Sp$ which are holomorphic on $\TT\ssm \cf$, vanish at $±{\rm i}\infty$ and have at worst simple poles along $\cf$. In other words, 
\[\tf := \left\{\frac{c_3 w^3 + c_2 w^2 + c_1 w}{bw^2+2w+b}\frac{\d}{\dw}~;~ (c_1,c_2,c_3)\in \C^3\right\}.\]
Let $\Theta_f:\T_\lambda \Lambda \to \tf$ be the linear map defined by 
\[\Theta_f(v) := \vartheta_v.\]
Let $\tau\in \tf$ be the radial vector field
\[\tau := w\frac{\d}{\dw}.\]
Note that $\tau - f^*\tau$ belongs to $\tf$. Indeed, 
\[\tau-f^*\tau = \frac{bw^3+w^2+bw}{bw^2+2w+b}\frac{\d}{\dw}\in \tf.\]

\begin{lemma}\label{lem:tf}
The space $\tf$ is the direct sum of the image of $\Theta_f$ and the line spanned by $\tau-f^*\tau$:
\[\tf = {\rm Im}(\Theta_f) \oplus {\rm Vect}(\tau-f^*\tau).\]
\end{lemma}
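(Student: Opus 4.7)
The plan is to reduce the claim to a concrete $3 \times 3$ determinant computation using the explicit parameterization of $\tf$ and the explicit formulas for $\vartheta_{v_a}$, $\vartheta_{v_b}$, and $\tau - f^*\tau$ already obtained in the excerpt.

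First I would observe that the explicit description
\[\tf = \left\{\frac{c_3 w^3 + c_2 w^2 + c_1 w}{bw^2+2w+b}\frac{\d}{\dw}~;~ (c_1,c_2,c_3)\in \C^3\right\}\]
identifies $\tf$ with $\C^3$ via the coordinates $(c_1,c_2,c_3)$, so $\dim_\C \tf = 3$. Therefore it suffices to exhibit three elements of $\tf$ whose coordinate triples are linearly independent in $\C^3$: once this is done, the two linearly independent elements in $\mathrm{Im}(\Theta_f)$ (taken as two of those three) together with the one-dimensional $\mathrm{Vect}(\tau - f^*\tau)$ will span $\tf$, and the direct sum decomposition follows by dimension count.

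Second, I would read off the coordinate triples from the computations already done in Section~\ref{sec:mvf}: from the formula for $\vartheta_{v_a}$, its triple is $(0, 2\pi\mathrm{i}, 0)$; from the formula for $\vartheta_{v_b}$, its triple is $(-1,0,1)$; and from the displayed expression for $\tau - f^*\tau$, its triple is $(b,1,b)$. The key computation is then the determinant
\[\det\begin{pmatrix} 0 & 2\pi\mathrm{i} & 0 \\ -1 & 0 & 1 \\ b & 1 & b \end{pmatrix} = 4\pi\mathrm{i}\, b,\]
which is nonzero since $b \in (0,1)$ (in particular $b \neq 0$). This simultaneously shows that $\vartheta_{v_a}$ and $\vartheta_{v_b}$ are linearly independent, so $\Theta_f$ is injective and $\mathrm{Im}(\Theta_f)$ is $2$-dimensional, and that $\tau - f^*\tau \notin \mathrm{Im}(\Theta_f)$.

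There is no real obstacle here; the lemma is essentially a packaging of the explicit calculations already in Section~\ref{sec:mvf}. The only thing to note is that the argument uses in an essential way that the family parameter $b$ is nonzero — this is where the geometric hypothesis $b \in (0,1)$ enters — and that the factor $2\pi\mathrm{i}$ (coming from differentiating the translation parameter $a$ in the exponential coordinate $w = \mathrm{e}^{2\pi \mathrm{i} z}$) is also nonzero. So the structure of the proof is: describe $\tf$ as $\C^3$, transcribe the three coefficient triples, compute the $3\times 3$ determinant, and conclude.
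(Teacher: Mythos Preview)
Your proof is correct and follows essentially the same approach as the paper: both reduce the claim to the linear independence of the three numerator polynomials $2\pi{\rm i}w^2$, $w^3-w$, and $bw^3+w^2+bw$ (equivalently, their coefficient triples in $\C^3$), which holds precisely because $b\neq 0$. The only minor remark is that in the paper's setup $\lambda\in\Lambda=\TT\times\C^*$, so $b\in\C^*$ rather than $b\in(0,1)$; the relevant fact is simply $b\neq 0$.
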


\begin{proof}
The dimension of $\tf$ is $3$. Thus, it is enough to show that the three vector fields $\vartheta_{v_a}$, $\vartheta_{v_b}$ and $\tau-f^*\tau$ are linearly independent. Equivalently, it is enough to show that the three functions 
\[w^2, \quad w^3-w\quad \text{and}\quad bw^3+w^2+bw\]
are linearly independent. This is true since $b\neq 0$. 
\end{proof}

Assume now $v\in \T_\lambda\Lambda$ and let $t\mapsto \lambda_t\in \Lambda$ be a curve such that $\dot\lambda = v$. Let $t\mapsto f_t$ be the family of maps defined by 
\[f_t:= F_{\lambda_t} :\TT\to \TT.\]
Then, for each $z\in \TT$,
\[\dot f(z)  = D_{\lambda,z}F(v,0) = \D_zf\circ \vartheta_v(z)\quad \text{with}\quad \vartheta_v := \Theta_f(v)\in \tf.\]
%

\begin{lemma}\label{lem:dftn}
For all $k\geq 1$, 
\[\frac{\d f_t^{\circ k}}{\d t}\Big|_{t=0} = \D f^{\circ k} \circ \vartheta_v^k\quad\text{with}\quad \vartheta_v^k:=\vartheta_v+f^*\vartheta_v + \cdots + \bigl(f^{\circ (k-1)}\bigr)^*\vartheta_v.\]
\end{lemma}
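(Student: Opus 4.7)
The plan is to prove the lemma by induction on $k\geq 1$. The base case $k=1$ is the identity $\dot f = \D f \circ \vartheta_v$ already established in the paragraph immediately preceding the statement, which is immediate from the definition $\vartheta_v(z) := (\D_z f)^{-1}\bigl(\D_{\lambda,z}F(v,0)\bigr)$ on $\TT\ssm\cf$.

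For the inductive step, I would write $f_t^{\circ(k+1)} = f_t\circ f_t^{\circ k}$ and differentiate at $t=0$ using the chain rule for a composition in which both factors depend on $t$. At a point $z\in \TT$ where every $f^{\circ j}$, $j\le k$, is a local isomorphism this yields
\[
\frac{\d f_t^{\circ(k+1)}}{\d t}\Big|_{t=0}(z) = \dot f\bigl(f^{\circ k}(z)\bigr) + \D_{f^{\circ k}(z)}f\circ \frac{\d f_t^{\circ k}}{\d t}\Big|_{t=0}(z).
\]
For the first summand I would substitute $\dot f = \D f\circ \vartheta_v$ and then rewrite $\vartheta_v\bigl(f^{\circ k}(z)\bigr)$ via the defining property of the pulled-back vector field, $\vartheta_v\circ f^{\circ k} = \D f^{\circ k}\circ (f^{\circ k})^*\vartheta_v$, followed by the chain rule $\D f^{\circ(k+1)} = \D f\circ \D f^{\circ k}$; this reshapes the first summand into $\D f^{\circ(k+1)}\circ (f^{\circ k})^*\vartheta_v(z)$. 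For the second summand the inductive hypothesis together with the chain rule gives $\D_{f^{\circ k}(z)}f\circ \D f^{\circ k}\circ \vartheta_v^k(z) = \D f^{\circ(k+1)}\circ \vartheta_v^k(z)$. Adding the two contributions and invoking the recursion $\vartheta_v^{k+1} = \vartheta_v^k + (f^{\circ k})^*\vartheta_v$ yields the desired formula.

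The only point requiring care, and the main obstacle if any, is that the objects in play are merely meromorphic on $\Sp$: $\vartheta_v\in \tf$ has simple poles along $\cf$ and vanishes at $\pm{\rm i}\infty$, so $(f^{\circ j})^*\vartheta_v$ has poles along $f^{-\circ j}(\cf)$. I would therefore read the claimed identity as an equality of meromorphic sections of the pullback of $\T\TT$ along $f^{\circ k}$, establish it on the open dense set where every $f^{\circ j}$ with $j\le k$ is a local isomorphism and $\vartheta_v\circ f^{\circ j}$ is holomorphic, and then invoke uniqueness of meromorphic extension to conclude. The telescoping rewriting $\dot f\circ f^{\circ k} = \D f^{\circ(k+1)}\circ (f^{\circ k})^*\vartheta_v$ is essentially functoriality of pullback of vector fields and is the step I would check most carefully.
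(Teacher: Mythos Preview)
Your proof is correct and follows essentially the same approach as the paper: induction on $k$ via the chain rule for a composition of $t$-dependent maps. The paper packages the inductive step as the abstract identity: if $h_t=g_t\circ f_t$ with $\dot f=\D f\circ\vartheta$ and $\dot g=\D g\circ\tau$, then $\dot h=\D h\circ(f^*\tau+\vartheta)$; applying this with $g_t=f_t^{\circ k}$ and $\tau=\vartheta_v^k$ gives $\vartheta_v^{k+1}=f^*\vartheta_v^k+\vartheta_v$, which is the same recursion you obtain (you factor as $f_t\circ f_t^{\circ k}$ instead, yielding the equivalent $\vartheta_v^{k+1}=\vartheta_v^k+(f^{\circ k})^*\vartheta_v$). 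Your extra remarks on meromorphic extension are sound but not needed in the paper's context, where the identity is only used near the parabolic cycle in $\R/\Z$, away from the poles.
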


\begin{proof}
The proof follows from an elementary induction on $k\geq 1$ using the following fact: if $h_t = g_t\circ f_t$ with $\dot f = \D f\circ \vartheta$ and $\dot g = \D g \circ \tau$, then 
\[\dot h = \dot g\circ f + \D g\circ \dot f = \D g \circ \tau\circ f + \D g\circ \D f \circ \vartheta = \D h \circ (f^*\tau + \vartheta).\qedhere\]
\end{proof}

Note that the poles of $\vartheta_v^n$ are the critical points of $f$ and their iterated preimages (up to order $n-1$). The two critical points of $f$ are in $\TT\ssm\R/\Z$, and so are all their preimages. Therefore, $\vartheta_v^n$ is holomorphic in a neighborhood of $\R/\Z$. In particular, it is holomorphic near the parabolic periodic point $x\in\R/\Z$. 

\subsection{Polar parts of quadratic differentials}

Our identification of the derivatives $\d A|_{\T_\lambda\Lambda}$ and $\d B|_{\T_\lambda\Lambda}$ relies on the use of quadratic differentials (see Appendix \ref{sec:qd} for basics regarding quadratic differentials). Recall that $\zeta:(\TT,x)\to (\C,0)$ is a local coordinate vanishing at $x$ such that 
\[\zeta\circ f^{\circ p} = \zeta + \zeta^3 + {\cal O}(\zeta^5).\]
We shall use the quadratic differential $(\d\zeta)^2/\zeta$ and $(\d\zeta)^2/\zeta^2$ which are defined and meromorphic near $x$ in $\TT$.

Following \S\ref{subsec:pairing}, if $Z\subset \T$ is a finite set, if $q$ is a quadratic differential, defined and meromorphic near $Z$ and if $\vartheta$ is a vector field, defined and meromorphic near $Z$, we shall use the notation 
\[\left< q,\vartheta\right>_Z := \sum_{z\in Z} {\rm residue}(q\otimes \vartheta,z).\]
If $q$ has at worst simple poles along $Z$ and if $\theta$ is defined on $Z$ with $\theta(z)\in \T_z\TT$ for $z\in Z$, we shall use the notation 
\[\left< q,\theta\right>_Z := \left< q,\vartheta\right>_Z\]
where $\vartheta$ is any vector field, defined and holomorphic near $Z$, with $\vartheta(z) = \theta(z)$ for $z\in Z$. The result does not depend on the choice of extension. 

\begin{lemma}
For all $v\in \T_\lambda\Lambda$, 
\[\d A(v)  = \left<\frac{(\d \zeta)^2}{\zeta},\vartheta_v^p\right>_x\quad \text{and}\quad \d B(v)  = \left<\frac{(\d \zeta)^2}{\zeta^2},\vartheta_v^p\right>_x.\]
\end{lemma}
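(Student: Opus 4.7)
The plan is to differentiate the Weierstrass factorization (\ref{eq:Weierstrass}) along a curve $t\mapsto \lambda_t$ in $\Lambda$ with $\dot\lambda = v$, match low-order Taylor coefficients in $\zeta$ on the two sides, and identify $\d A(v)$ and $\d B(v)$ as the two residues sought. Applying $\frac{d}{dt}\bigl|_{t=0}$ to $\zeta\circ F_{\lambda_t}^{\circ p} - \zeta = P_{\lambda_t}(\zeta)\cdot g(\lambda_t,\zeta)$, Lemma~\ref{lem:dftn} identifies the derivative of the left-hand side with $\d(\zeta\circ f^{\circ p})\bigl(\vartheta_v^p\bigr)$, while (\ref{eq:ABC}) gives $P_\lambda(\zeta) = \zeta^3$, so that the derivative of the right-hand side equals $\d A(v) + \d B(v)\,\zeta + {\cal O}(\zeta^2)$.

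Next, I would expand $\d(\zeta\circ f^{\circ p}) = \bigl(1+3\zeta^2+{\cal O}(\zeta^4)\bigr)\d\zeta$ using the parabolic normal form (\ref{eq:zeta}). For this to be meaningfully combined with $\vartheta_v^p$ I need $\vartheta_v^p$ to be holomorphic at $x$: its poles lie in $\bigcup_{j=0}^{p-1} f^{-j}(\cf)$, and since $\cf\subset\TT\ssm\R/\Z$ while $\R/\Z$ is $f$-invariant, none of these preimages meet $\R/\Z$. Thus $V(\zeta):=\d\zeta\bigl(\vartheta_v^p\bigr)$ is a well-defined holomorphic germ at $0$, and equating the two expressions for the derivative yields
\[\bigl(1+3\zeta^2+{\cal O}(\zeta^4)\bigr) V(\zeta) = \d A(v) + \d B(v)\,\zeta + {\cal O}(\zeta^2),\]
so $V(\zeta) = \d A(v) + \d B(v)\,\zeta + {\cal O}(\zeta^2)$.

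Finally, I would read off the residues. Contracting the two quadratic differentials with $\vartheta_v^p$ gives
\[\frac{(\d\zeta)^2}{\zeta}\otimes\vartheta_v^p = \frac{V(\zeta)}{\zeta}\,\d\zeta \and \frac{(\d\zeta)^2}{\zeta^2}\otimes\vartheta_v^p = \frac{V(\zeta)}{\zeta^2}\,\d\zeta,\]
whose residues at $x$ are $V(0)=\d A(v)$ and $V'(0)=\d B(v)$ respectively. No step is deep; the calculation is essentially bookkeeping, and the only care required is in treating $\zeta$ simultaneously as the local coordinate in which the parabolic normal form is written and as the scalar against which $\vartheta_v^p$ is expanded.
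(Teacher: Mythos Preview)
Your proof is correct and follows essentially the same route as the paper: differentiate the Weierstrass factorization along a curve with velocity $v$, use Lemma~\ref{lem:dftn} to rewrite the left side as $\d\zeta\circ\D f^{\circ p}\circ\vartheta_v^p$, invoke the parabolic normal form to replace $\d\zeta\circ\D f^{\circ p}$ by $\bigl(1+{\cal O}(\zeta^2)\bigr)\d\zeta$, and read off the first two Taylor coefficients of $\d\zeta(\vartheta_v^p)$ as the desired residues. Your explicit justification that $\vartheta_v^p$ is holomorphic at $x$ (via forward invariance of $\R/\Z$ and $\cf\cap\R/\Z=\emptyset$) matches the paper's remark preceding this lemma.
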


\begin{proof}
According to Equations \eqref{eq:Weierstrass}, \eqref{eq:ABC} and  \eqref{eq:P}, 
\[\zeta\circ f_t^{\circ p} - \zeta = \bigl(A(\lambda_t) + B(\lambda_t)\zeta + {\cal O}(\zeta^2)\bigr)\cdot \bigl(1+{\cal O}(\zeta^2)\bigr).\]
Taking the derivative with respect to $t$ and evaluating at $t=0$ yields
\[\d\zeta\circ \D f^{\circ p} \circ \vartheta_v^p = \d A(v) + \d B(v)\zeta + {\cal O}(\zeta^2).\]
According to Equation \eqref{eq:zeta}, 
\[\zeta\circ f^{\circ p} = \zeta + {\cal O}(\zeta^3)\quad \text{so that}\quad \d\zeta\circ \D f^{\circ p} = \bigl(1+{\cal O}(\zeta^2)\bigr)\d \zeta.\]
As a consequence
\[\d\zeta(\vartheta_v^p) = \d\zeta\circ \D f^{\circ p} \circ \vartheta_v^p + {\cal O}(\zeta^2)= \d A(v) + \d B(v)\zeta + {\cal O}(\zeta^2).\]
Thus, 
\[\d A(v)  = {\rm residue}\left(\frac{\d\zeta(\vartheta_v^p)}{\zeta}\d\zeta,x\right) = \left<\frac{(\d \zeta)^2}{\zeta},\vartheta_v^p\right>_x\]
and similarly
\[\d B(v)  = {\rm residue}\left(\frac{\d\zeta(\vartheta_v^p)}{\zeta^2}\d\zeta,x\right) = \left<\frac{(\d \zeta)^2}{\zeta^2},\vartheta_v^p\right>_x.\qedhere\]
\end{proof}

Rather than working near $x$ with the vector field $\vartheta_v^p$, it will be convenient to work along the cycle $\left<x\right>$ with the vector field $\vartheta_v$. Recall that for $k\in [1,p]$, the local coordinate $\zeta_k:(\TT,x_k)\to (\C,0)$ vanishes at $x_k$ and is defined by 
\[\zeta_k := \zeta\circ f^{\circ (p-k)}.\]

\begin{lemma}\label{lem:invariance}
For all $k\in \Z/p\Z$, 
\[f^*\left(\frac{(\d\zeta_{k+1})^2}{\zeta_{k+1}}\right) - \frac{(\d\zeta_{k})^2}{\zeta_k}\quad\text{and}\quad f^*\left(\frac{(\d\zeta_{k+1})^2}{\zeta_{k+1}}\right) - \frac{(\d\zeta_{k})^2}{\zeta_k}\]
are holomorphic near $x_k$. 
\end{lemma}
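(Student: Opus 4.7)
The plan is to split the statement into two cases according to whether the pullback wraps around the cycle. Indexing by representatives in $\{1,\ldots,p\}$, the case $k\in[1,p-1]$ is immediate and the substantive computation only occurs at $k\equiv 0$ modulo $p$ (the parabolic return to $x$).

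For $k\in[1,p-1]$, the very definition $\zeta_j:=\zeta\circ f^{\circ(p-j)}$ yields
\[\zeta_{k+1}\circ f=\zeta\circ f^{\circ(p-k-1)}\circ f=\zeta\circ f^{\circ(p-k)}=\zeta_k\]
as holomorphic functions in a neighborhood of $x_k$; that is, $f^*\zeta_{k+1}=\zeta_k$. Differentiating gives $f^*(\d\zeta_{k+1})=\d\zeta_k$, and hence both quadratic differentials $f^*\bigl((\d\zeta_{k+1})^2/\zeta_{k+1}^j\bigr)$ coincide with $(\d\zeta_k)^2/\zeta_k^j$ identically, for $j=1$ and $j=2$; the differences are not merely holomorphic but vanish.

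The only remaining case is $k=p$, where $\zeta_{k+1}=\zeta_1=\zeta\circ f^{\circ(p-1)}$, $\zeta_k=\zeta_p=\zeta$, and $x_k=x_p=x$. Composition gives $\zeta_1\circ f=\zeta\circ f^{\circ p}$, and the parabolic normal form \eqref{eq:zeta} yields $f^*\zeta_1=\zeta+\zeta^3+{\cal O}(\zeta^5)$. From this I would compute in succession $f^*(\d\zeta_1)=\bigl(1+3\zeta^2+{\cal O}(\zeta^4)\bigr)\d\zeta$ and $f^*(1/\zeta_1^j)=\bigl(1-j\zeta^2+{\cal O}(\zeta^4)\bigr)/\zeta^j$ for $j=1,2$. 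Multiplying yields
\[f^*\!\left(\frac{(\d\zeta_1)^2}{\zeta_1^j}\right)=\bigl(1+{\cal O}(\zeta^2)\bigr)\frac{(\d\zeta)^2}{\zeta^j},\]
and subtracting $(\d\zeta)^2/\zeta^j$ leaves an expression of the form $\zeta^{-j}\cdot{\cal O}(\zeta^2)\cdot(\d\zeta)^2$, which is holomorphic at $x$ for both $j=1$ (yielding a simple zero) and $j=2$ (a bounded quadratic differential).

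The only real obstacle is bookkeeping the orders in the parabolic expansion. The cancellation happens simultaneously for both polar orders precisely because the coefficient of $\zeta^2$ in $f^{\circ p}(\zeta)-\zeta$ vanishes in the chosen coordinate, so the leading deviation enters only at order $\zeta^3$, which is exactly what is needed to absorb both the $1/\zeta$ and the $1/\zeta^2$ singularities. This formal compatibility is precisely what Proposition~\ref{prop:dAdB} relies on in order to reassemble $\d A(v)$ and $\d B(v)$ as residue sums distributed around the full cycle $\left<x\right>$ using the vector field $\vartheta_v$, rather than being concentrated at $x$ via $\vartheta_v^p$.
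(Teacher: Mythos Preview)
Your proof is correct and follows essentially the same approach as the paper: split into the trivial case $k\in[1,p-1]$ where $\zeta_{k+1}\circ f=\zeta_k$ identically, and the case $k=p$ where the parabolic normal form $\zeta\circ f^{\circ p}=\zeta+\zeta^3+{\cal O}(\zeta^5)$ gives $f^*\bigl((\d\zeta_1)^2/\zeta_1^j\bigr)=\bigl(1+{\cal O}(\zeta^2)\bigr)(\d\zeta)^2/\zeta^j$. Your version is slightly more explicit in tracking the intermediate coefficients, but the argument is the same.
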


\begin{proof}
If $k\in [1,p-1]$, then $\zeta_k = \zeta_{k+1}\circ f$, so that 
\[f^*\left(\frac{(\d\zeta_{k+1})^2}{\zeta_{k+1}}\right) = \frac{(\d\zeta_{k})^2}{\zeta_k}\quad \text{and}\quad
f^*\left(\frac{(\d\zeta_{k+1})^2}{\zeta_{k+1}^2}\right) = \frac{(\d\zeta_{k})^2}{\zeta_k^2}.\]
If $k=p$, then $\zeta_p = \zeta$ and $\zeta_1 \circ f= \zeta\circ f^{\circ p} = \bigl(1+{\cal O}(\zeta_p^2)\bigr)\zeta_p$. As a consequence, 
$f^*(\d \zeta_1) = \bigl(1+{\cal O}(\zeta_p^2)\bigr)\d \zeta_p$, 
\[f^*\left(\frac{(\d\zeta_1)^2}{\zeta_1}\right) = \bigl(1+{\cal O}(\zeta_p^2)\bigr)\frac{ (\d\zeta_p)^2}{\zeta_p} 
\quad \text{and}\quad f^*\left(\frac{(\d\zeta_1)^2}{\zeta_1^2}\right) = \bigl(1+{\cal O}(\zeta_p^2)\bigr)\frac{ (\d\zeta_p)^2}{\zeta_p^2} .\qedhere\]
\end{proof}


\begin{proof}[Proof of Proposition \ref{prop:dAdB}]
Recall that $\zeta_p = \zeta$.
According to the previous lemma, for all $k\in \Z/p\Z$, 
\[(f^{\circ k})^* \frac{(\d\zeta_k)^2}{\zeta_k} - \frac{(\d\zeta)^2}{\zeta}\]
 is holomorphic near $x$. By assumption, $q_A-(\d\zeta_k)^2/\zeta_k$ is holomorphic at $x_k$. 
 It follows that $(f^{\circ k})^* q_A - (\d\zeta)^2/\zeta$ is holomorphic near $x$. 

Since $(f^{\circ k})^*\vartheta_v$ is holomorphic near $x$, we therefore have 
\[ \left<\frac{(\d \zeta)^2}{\zeta},(f^{\circ k})^*\vartheta_v\right>_x =  \left<(f^{\circ k})^* q_A,(f^{\circ k})^*\vartheta_v\right>_x = 
\left<q_A,\vartheta_v\right>_{x_k}.\]
As a consequence 
\[\d A(v)  = \left<\frac{(\d \zeta)^2}{\zeta},\sum_{k=0}^{p-1} (f^{\circ k})^*\vartheta_v\right>_x = \sum_{k=0}^{p-1}\left<q_A,\vartheta_v\right>_{x_k} =  \left<q_A,\vartheta_v\right>_{\left<x\right>}.\]
This proves Proposition \ref{prop:dAdB} for $\d A$. The proof for $\d B$ is similar. 
 \end{proof}

\section{Injectivity of $\nabla_f$\label{sec:inj}}

In order to prove Theorem \ref{theo:main}, we need to use the global properties of the map $f$. Up to now, we only used the local properties near the cycle. For this purpose, it is important that the quadratic differentials $q_A$ and $q_B$ which appear in Proposition \ref{prop:dAdB} are globally meromorphic on $\Sp$. Here, we define such quadratic differentials $q_A$ and $q_B$ and we prove that the linear map 
\[\nabla_f := \id-f_*\]
 is well defined and injective on ${\rm Vect}(q_A,q_B)$. 

\subsection{A space of quadratic differentials}
Denote by $\Quad(\TT)$ the space of meromorphic quadratic differentials on $\Sp$ which have at worst simple poles at $z=±{\rm i}\infty$. 
Given $Z\subset \TT$, denote by $\Quad(\TT;Z)\subset \Quad(\TT)$ the subspace of quadratic differentials which are holomorphic outside $Z$. 
Finally, we denote by $\Quad^1(\TT;Z)\subset \Quad(\TT;Z)$ the subspace of quadratic differentials having at worst simple poles. 

\begin{lemma}
Any polar part of quadratic differential along $\left<x\right>$ may be realized as the polar part of a quadratic differential in $\Quad\bigl(\TT;\left<x\right>\cup \{c^+\}\bigr)$ having at worst a simple pole at $c^+$. 
\end{lemma}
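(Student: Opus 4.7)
The plan is to prove this by a dimension count on the Riemann sphere $\Sp\cong \Chat$. Given a prescribed polar part $\pi$ along $\left<x\right>$ having pole of order $N_k\geq 0$ at $x_k$, set $N:=\sum_{k=0}^{p-1}N_k$, so that the space $P$ of polar parts of the prescribed orders has complex dimension $N$. I would introduce the subspace $V\subset \Quad\bigl(\TT;\left<x\right>\cup\{c^+\}\bigr)$ consisting of quadratic differentials with pole of order at most $N_k$ at $x_k$, at worst simple pole at $c^+$, and at worst simple pole at $\pm{\rm i}\infty$. These are exactly the global sections of the line bundle $K^{\otimes 2}(D)$ on $\Sp$, where $K$ is the canonical bundle and $D:=\sum_k N_k\cdot x_k + c^+ + (+{\rm i}\infty) + (-{\rm i}\infty)$ is a divisor of degree $N+3$. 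Since $\deg K^{\otimes 2}(D)=N-1\geq -1$, Riemann--Roch on $\Chat$ gives $\dim V=N$.

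Next I would consider the natural linear map ${\rm ev}:V\to P$ sending $q\in V$ to the collection of its polar parts at the $x_k$. Its kernel consists of quadratic differentials in $\Quad(\TT)$ that are holomorphic outside $\{c^+,\pm{\rm i}\infty\}$ with at worst simple poles there --- that is, global sections of $K^{\otimes 2}\bigl(c^+ + (+{\rm i}\infty) + (-{\rm i}\infty)\bigr)$, a line bundle of degree $-1$ on $\Chat$, which has no nonzero sections. Hence ${\rm ev}$ is an injection between vector spaces of the same finite dimension $N$, hence an isomorphism, and every prescribed polar part lifts to some $q\in V$. By construction such $q$ has at worst a simple pole at $c^+$, which proves the claim.

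The main point to verify is that both relevant line bundles on $\Chat$ lie in the regime where Riemann--Roch reduces to $h^0=\deg+1$ with no contribution from $h^1$, i.e., have degree $\geq -1$; this is the case because the three ``slack'' simple poles available at $c^+$ and $\pm{\rm i}\infty$ are exactly what is needed to offset $\deg K^{\otimes 2}=-4$ on $\Chat$. Beyond this dimensional balance, the argument is purely formal: the bookkeeping of pole orders and the observation that the ``extra'' simple pole at $c^+$ is precisely what makes the space $V$ large enough to surject onto $P$.
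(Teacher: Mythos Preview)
Your proof is correct and proceeds by a clean Riemann--Roch dimension count; the paper instead gives an explicit construction. In the coordinate $w$, the paper simply writes down, for each $x_k$ and each $j\geq 1$, a quadratic differential in $\Quad\bigl(\TT;\left<x\right>\cup\{c^+\}\bigr)$ with at worst a simple pole at $c^+$ whose leading polar term at $x_k$ has order exactly $j$ (namely $(\dw)^2/\bigl((w-w(x_k))(w-w(c^+))w\bigr)$ for $j=1$, $(\dw)^2/\bigl((w-w(x_k))^2 w\bigr)$ for $j=2$, and $(\dw)^2/(w-w(x_k))^j$ for $j\geq 3$), and then appeals to the triangular structure of leading terms to see that these span all polar parts. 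Your argument trades this hands-on verification for the observation that the three allowed simple poles at $c^+$ and $\pm{\rm i}\infty$ exactly compensate $\deg K^{\otimes 2}=-4$, forcing the evaluation map to be an isomorphism; this makes transparent \emph{why} a single extra simple pole at $c^+$ suffices, and would generalize immediately to other configurations. The paper's approach, on the other hand, avoids invoking Riemann--Roch and keeps everything at the level of rational functions in $w$, which fits the concrete tone of the surrounding computations.
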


\begin{proof}
For all $k\in [0,p-1]$, the quadratic differentials 
\[\frac{(\dw)^2}{\bigl(w-w(x_k)\bigr)\bigl(w-w(c^+)\bigr)w},\quad \frac{(\dw)^2}{\bigl(w-w(x_k)\bigr)^2w}\quad\text{and}\quad \frac{(\dw)^2}{\bigl(w-w(x_k)\bigr)^j}\quad \text{for }j\geq 3\]
belong to $\Quad\bigl(\TT;\left<x\right>\cup \{c^+\}\bigr)$.
The first has a simple pole at $x_k$, the second has a double pole at $x_k$, and the third has a pole of order $j\geq 3$ at $x_k$. Thus, they generate the space of polar parts at $x_k$. 
\end{proof}

From now on, we assume that $q_A\in \Quad\bigl(\TT;\left<x\right>\cup \{c^+\}\bigr)$ and $q_B\in   \Quad\bigl(\TT;\left<x\right>\cup \{c^+\}\bigr)$  have at worst simple poles at $c^+$ and that 
$q_A-(\d\zeta_k)^2/\zeta_k$ and $q_B - (\d\zeta_k)^2/\zeta_k^2$ are holomorphic at $x_k$ for all $k\in [0,p-1]$. 
We set 
\[\qf:={\rm Vect}(q_A,q_B).\] 

\subsection{Pairing quadratic differentials in $\qf$ with vector fields in $\tf$}

Recall that 
\[\tau := w\frac{\d}{\dw} \quad \text{and}\quad \tau - f^*(\tau)\in \tf.\]

\begin{lemma}
For all $q\in \qf$, 
\[\left<q,\tau-f^*\tau\right>_{\left<x\right>}=0.\]
\end{lemma}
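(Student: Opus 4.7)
Since $\qf=\mathrm{Vect}(q_A,q_B)$, by linearity I would only need to handle $q=q_A$ and $q=q_B$ separately. My plan is to re-run the telescoping argument from the proof of Proposition~\ref{prop:dAdB}, but with $\vartheta=\tau-f^*\tau$ in place of $\vartheta_v$. That proof uses only two inputs: the polar-part matching of $q_A$ and $q_B$ with $(\d\zeta_k)^2/\zeta_k$ and $(\d\zeta_k)^2/\zeta_k^2$ (which concerns the quadratic differentials, not the vector field), and the holomorphy of $(f^{\circ k})^*\vartheta_v$ near $x$ for $k=0,\ldots,p-1$. The second requirement is still satisfied here, because $\tau-f^*\tau\in \tf$ has poles only on $\cf\subset \TT\ssm (\R/\Z)$, while $\left<x\right>$ and all its orbit-preimages stay in $\R/\Z$ where $f$ has no critical points.

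Consequently, verbatim as in Proposition~\ref{prop:dAdB}, I would obtain
\[\left<q_A,\tau-f^*\tau\right>_{\left<x\right>} = \left<\frac{(\d\zeta)^2}{\zeta},\ \sum_{k=0}^{p-1}(f^{\circ k})^*(\tau-f^*\tau)\right>_x,\]
and the analogous identity for $q_B$ with $(\d\zeta)^2/\zeta^2$. The inner sum telescopes,
\[\sum_{k=0}^{p-1}\bigl[(f^{\circ k})^*\tau-(f^{\circ(k+1)})^*\tau\bigr]=\tau-(f^{\circ p})^*\tau,\]
so the lemma is reduced to showing that the $1$-forms $(\d\zeta)^2/\zeta^j\cdot(\tau-(f^{\circ p})^*\tau)$ are holomorphic at $x$ for $j\in\{1,2\}$.

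This is a short Taylor computation in the coordinate $\zeta$. Writing $\tau=\tau_0(\zeta)\,\partial_\zeta$ with $\tau_0$ holomorphic, and using $\zeta\circ f^{\circ p}=\zeta+\zeta^3+{\cal O}(\zeta^5)$ together with $(f^{\circ p})'=1+3\zeta^2+{\cal O}(\zeta^4)$, one finds
\[(f^{\circ p})^*\tau=\frac{\tau_0\circ f^{\circ p}}{(f^{\circ p})'}\,\partial_\zeta=\bigl(\tau_0(\zeta)-3\tau_0(0)\zeta^2+{\cal O}(\zeta^3)\bigr)\partial_\zeta,\]
so that $\tau-(f^{\circ p})^*\tau=3\tau_0(0)\zeta^2\,\partial_\zeta+{\cal O}(\zeta^3)$ vanishes to order at least two at $x$. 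Multiplying by $(\d\zeta)^2/\zeta$ or $(\d\zeta)^2/\zeta^2$ produces a $1$-form that is holomorphic at $\zeta=0$, so the residue is zero.

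Conceptually, $\tau-f^*\tau$ is the infinitesimal deformation of $f$ obtained by conjugating by the flow of $\tau$, and such a conjugation preserves the multiplicities of periodic points of every iterate; this lemma is the algebraic shadow of that invariance, and the decomposition $\tf=\mathrm{Im}(\Theta_f)\oplus \mathrm{Vect}(\tau-f^*\tau)$ isolates it cleanly. There is no real obstacle in the argument, only the bookkeeping: the key technical fact is that the parabolic normal form $\zeta+\zeta^3+{\cal O}(\zeta^5)$ forces an order-$2$ vanishing of $\tau-(f^{\circ p})^*\tau$ at $x$, which is exactly what is needed to kill both the simple and double poles of the quadratic differentials in $\qf$.
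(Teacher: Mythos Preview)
Your argument is correct, but it takes a more computational route than the paper's three-line proof. The paper observes directly that, by Lemma~\ref{lem:invariance}, $q-f^*q$ is holomorphic near $\left<x\right>$; since $f^*\tau$ is also holomorphic there (as $\tau$ is holomorphic on $\TT$ and $f$ is a local isomorphism along the cycle), one gets
\[\left<q,f^*\tau\right>_{\left<x\right>}=\left<f^*q,f^*\tau\right>_{\left<x\right>}=\left<q,\tau\right>_{\left<x\right>},\]
the second equality because $f$ permutes the cycle and pulling back a $1$-form by a local isomorphism preserves residues. This avoids any explicit Taylor expansion.

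Your approach instead transports everything to the single point $x$ via $(f^{\circ k})^*$, telescopes to $\tau-(f^{\circ p})^*\tau$, and checks by hand that this vanishes to order~$2$ in the coordinate $\zeta$. This is longer but has the merit of making the mechanism completely explicit: the order-$2$ vanishing of $\tau-(f^{\circ p})^*\tau$ is exactly equivalent to the holomorphy of $q-f^*q$ along $\left<x\right>$ for $q$ with at worst double poles, so you are essentially reproving Lemma~\ref{lem:invariance} in disguise. The paper's version packages this invariance once and then invokes it; yours unpacks it at the point of use.
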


\begin{proof}
Assume $q\in \qf$. According to Lemma \ref{lem:invariance}, $q-f^*q$ is holomorphic near $\left<x\right>$. Since $\tau$ is also holomorphic near $\left<x\right>$, and since $f$ is a local isomorphism near $\left<x\right>$, 
\[\left<q,f^*\tau\right>_{\left<x\right>} = \left<f^*q,f^*\tau\right>_{\left<x\right>} = \left<q,\tau\right>_{\left<x\right>}.\qedhere\]
\end{proof}

\subsection{Pushing forward quadratic differentials in $\Quad_f$}

According to Corollary \ref{coro:covering}, $f:\TT\ssm f^{-1}(\vf)\to \TT\ssm \vf$ is a covering map. 
Here, we show that for all $q\in \Quad$, the following series defines a meromorphic quadratic differential on $\Sp$: 
\begin{equation}\label{eq:push}
f_*q  = \sum_{g\text{ inverse branch of }f} g^* q.
\end{equation}
The (minor) difficulty is that the degree of the covering map is not finite, and that $q$ may fail to be integrable on $\Sp$ since it may have multiple poles along $\left<x\right>$. So, we cannot apply directly the results presented in Appendix \ref{sec:qd}. 
The reason why the series in Equation \eqref{eq:push} converges is that $q$ is locally integrable near the essential singularities of $f$, i.e., the points $±{\rm i}\infty$. 

\begin{lemma}
If $q\in \qf$, the series in Equation \eqref{eq:push} converges locally uniformly in $\TT\ssm \bigl(\vf\cup \left<x\right>\bigr)$. 
Its sum $f_*q$ is a meromorphic quadratic differential on $\Sp$. 
\end{lemma}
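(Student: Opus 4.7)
The plan derives convergence from an $L^1$-mass estimate on preimages and then upgrades $L^1$-convergence of holomorphic summands to locally uniform convergence via the mean-value property for holomorphic functions. Fix a relatively compact open set $V$ with $\overline V \subset \TT \ssm (\vf \cup \left<x\right>)$. By Corollary \ref{coro:covering}, $f^{-1}(V) = \bigsqcup_i U_i$, where each restriction $f|_{U_i}: U_i \to V$ is a biholomorphism with inverse $g_i: V \to U_i$. Since $V$ avoids $\vf \cup \left<x\right>$, each $U_i$ avoids $\cf \cup \left<x\right>$, hence contains no pole of $q$ lying in $\TT$, so every summand $g_i^*q$ is holomorphic on $V$.

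Because each $g_i$ is biholomorphic, the change-of-variables formula gives $\int_V |g_i^*q| = \int_{U_i} |q|$, and by disjointness of the $U_i$,
\[\sum_i \int_V |g_i^*q| = \int_{f^{-1}(V)} |q|.\]
This total mass is finite: $f^{-1}(V)$ stays bounded away from the poles $\left<x\right>$ and $c^+$ of $q$ in $\TT$, so the only potential source of divergence would be the accumulation of $f^{-1}(V)$ at $\pm{\rm i}\infty$, where $q$ has at worst simple poles and is therefore locally integrable. Hence $\sum_i g_i^*q$ converges absolutely in $L^1(V)$. Writing each summand locally as $\psi_i(w)(\dw)^2$ with $\psi_i$ holomorphic, the mean-value identity $\psi_i(w_0) = (\pi r^2)^{-1}\int_{D(w_0,r)}\psi_i\,\d A$ combined with the triangle inequality converts $L^1$-convergence of $\sum_i \psi_i$ into locally uniform convergence on $V$. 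Thus $f_*q$ is a well-defined holomorphic quadratic differential on $\TT \ssm (\vf \cup \left<x\right>)$.

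To extend $f_*q$ to a meromorphic quadratic differential on $\Sp$, we localize at each candidate singularity. Near a critical value $y \in \vf$, the finitely many inverse branches whose images cluster in a fixed compact neighborhood of $f^{-1}(y)$ in $\TT$ assemble into a meromorphic quadratic differential by the standard pushforward construction under a local branched cover, while the remaining branches (whose images accumulate at $\pm{\rm i}\infty$) contribute a holomorphic term by the same $L^1$ argument on a punctured neighborhood of $y$. Near $x_k \in \left<x\right>$, the single inverse branch sending $x_k$ to $x_{k-1}$ inherits the polar part of $q$ at $x_{k-1}$, while all other branches remain holomorphic near $x_k$. Near $\pm{\rm i}\infty$, the bound $\int_V |f_*q| \leq \int_{f^{-1}(V)}|q| < \infty$ on a small punctured neighborhood $V$ shows that $f_*q$ is locally integrable there, hence extends with at worst a simple pole. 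The main obstacle is the finiteness of $\int_{f^{-1}(V)}|q|$, which crucially uses that $q \in \Quad(\TT)$ has only simple poles at the essential singularities $\pm{\rm i}\infty$ of $f$ --- without this hypothesis, the infinite fibers of $f$ near these essential singularities would prevent the series from converging at all.
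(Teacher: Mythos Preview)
Your proof is correct and follows essentially the same approach as the paper: both arguments use that $q$ is integrable on $\Sp\ssm\left<x\right>$ (because its only multiple poles lie on the cycle) to get $L^1$, hence locally uniform, convergence of the pushforward series away from $\vf\cup\left<x\right>$, and then localize at the remaining points. The paper streamlines slightly by taking $V$ compactly contained in $\Sp\ssm\left<x\right>$ rather than in $\TT$, which handles $\vf$ and $\pm{\rm i}\infty$ in one stroke via integrability instead of your separate branched-cover decomposition at $\vf$, but the substance is the same.
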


\begin{proof}
Assume $q\in \qf$. Let $V\subset \TT\ssm \bigl(\vf\cup \left<x\right>\bigr)$ is compactly contained in $\Sp\ssm \left<x\right>$. Then, $U:=f^{-1}(V)$ is compactly contained in $\Sp\ssm \left<x\right>$. In particular, $q$ is integrable on $U$. In addition, $f:U\to V$ is a covering map. It follows that the series in Equation \eqref{eq:push} converges uniformly on $V$ and that $f_*q$ is integrable on $V$. This shows $f_*q$ is holomorphic on $\TT\ssm \bigl(\vf\cup \left<x\right>\bigr)$ and has at worst simple poles at $±{\rm i} \infty$ and on $\vf$. 

To see that $f_*q$ is meromorphic near $x_k$, $k\in [1,p]$,  let $V\subset \TT\ssm \vf$ be a topological disk containing $x_k$. Then, $U:=f^{-1}(V)$ is the disjoint union of a topological disk $U'$ containing $x_{k-1}$ and a open set $U''$ compactly contained in $\Sp\ssm \left<x\right>$. Then, 
$(f|_{U''})_*q$ is holomorphic. In addition, $f:U'\to U$ is an isomorphism so that $ (f|_{U'})_*q $ -- and thus $f_*q = (f|_{U'})_*q + (f|_{U'})_*q$ -- is meromorphic near $x_k$.
\end{proof}

We may therefore consider the linear map
\[\nabla_f: =\id -f_* : \qf \to \Quad(\TT).\]
It will be convenient to set 
\[Y:= \{c^+\}\cup \vf.\]

\begin{lemma}
We have the inclusion 
\[\nabla_f(\qf)\subseteq \Quad^1(\TT;Y).\]
\end{lemma}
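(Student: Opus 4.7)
The plan is to examine the singularities of $\nabla_f(q) = q - f_*q$ for an arbitrary $q \in \qf$, point by point on $\Sp$. By construction, $q$ is holomorphic off $\left<x\right> \cup \{c^+\} \cup \{\pm{\rm i}\infty\}$, with poles of order at most $2$ along $\left<x\right>$ and at worst simple poles elsewhere. The pushforward $f_*q$ can only acquire poles at critical values of $f$, at the forward images of the poles of $q$ (which lie in $f(\left<x\right>) \cup \{f(c^+)\} \subset \left<x\right> \cup \vf$), and at the essential singularities $\pm{\rm i}\infty$. Consequently the singular set of $\nabla_f(q)$ is contained in $\left<x\right> \cup \{c^+\} \cup \vf \cup \{\pm{\rm i}\infty\}$, and it suffices to prove two assertions: $\nabla_f(q)$ is holomorphic at every $x_k \in \left<x\right>$, and $\nabla_f(q)$ has at worst a simple pole at each point of $\{c^+\} \cup \vf \cup \{\pm{\rm i}\infty\}$.

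For holomorphicity at a cycle point $x_k$, the plan is to revisit the splitting $f^{-1}(V) = U' \sqcup U''$ from the proof of the preceding lemma, where $V \subset \TT \ssm \vf$ is a small topological disk containing $x_k$, $f|_{U'} : U' \to V$ is an isomorphism with $U' \ni x_{k-1}$, and $U''$ is compactly contained in $\Sp \ssm \left<x\right>$. Since $V \cap \vf = \emptyset$, $U''$ avoids $\cf$ and hence $\{c^+\}$, so $q$ is holomorphic on $U''$ and $(f|_{U''})_*q$ is holomorphic on $V$. It then remains to analyze $(f|_{U'})_*q = g^*q$ with $g := (f|_{U'})^{-1}$. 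Applying Lemma~\ref{lem:invariance} at index $k-1$ to the two polar types $(\d\zeta_k)^2/\zeta_k$ and $(\d\zeta_k)^2/\zeta_k^2$, which together span the polar parts of $q_A$ and $q_B$ at $x_k$, gives that $f^*q - q$ is holomorphic near $x_{k-1}$. Pulling back by $g$ and using $g^* \circ f^* = \id$ yields $q - g^*q$ holomorphic near $x_k$; combined with the previous observation, $q - f_*q$ is holomorphic at $x_k$.

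For the simple-pole bound at $v = f(c) \in \vf$, the critical point $c \in \cf$ is simple because $b \in (0,1)$ forces $b \neq 1$, so near $c$ the map $f$ is analytically conjugate to $z \mapsto z^2$. A direct Laurent expansion through this model shows that the pushforward of a quadratic differential with at worst a simple pole at $c$ has at worst a simple pole at $v$, as odd-order terms cancel upon summing over the two sheets. The contributions of the remaining inverse branches of $f$ at $v$ are holomorphic. Since $q$ itself is holomorphic at $v$ (because $v \notin \left<x\right> \cup \{c^+\}$), $\nabla_f(q)$ has at worst a simple pole at $v$. At $c^+$, no preimage of $c^+$ under $f$ is a pole of $q$ (the cycle $\left<x\right> \subset \R/\Z$ while $c^+ \notin \R/\Z$; $c^+$ is not a fixed critical point; and $\pm{\rm i}\infty$ are mapped to themselves, not to $c^+$), so $f_*q$ is holomorphic at $c^+$ and $\nabla_f(q)$ inherits the at worst simple pole of $q$. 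At $\pm{\rm i}\infty$, the preceding lemma already guarantees $f_*q$ has at worst simple poles there, and $q \in \Quad(\TT)$ does too by definition, so the difference does as well.

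The principal obstacle is the cancellation at the cycle: that the polar parts of $q$ along $\left<x\right>$, which could be of order $2$, are annihilated by $\id - f_*$. This is precisely what the carefully chosen polar parts of $q_A$ and $q_B$, combined with the invariance statement in Lemma~\ref{lem:invariance}, are engineered to produce. Once this cancellation is in hand, the remaining bounds at $\vf$, $c^+$ and $\pm{\rm i}\infty$ reduce to the standard local model for pushforwards through a simple branch point and to invoking the preceding convergence lemma.
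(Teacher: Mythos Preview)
Your proof is correct and follows essentially the same route as the paper: both arguments hinge on the decomposition $f^{-1}(V)=U'\sqcup U''$ near each $x_k$, together with Lemma~\ref{lem:invariance} to cancel the polar parts of $q$ and $f_*q$ along the cycle, and both rely on the preceding lemma for the behaviour of $f_*q$ away from $\left<x\right>$.

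The only stylistic difference is at the points of $\vf$: the paper dispatches this in one stroke by quoting the integrability statement from the preceding lemma (for $V$ compactly contained in $\Sp\ssm\left<x\right>$, the quadratic differential $q$ is integrable on $f^{-1}(V)$, hence $f_*q$ is integrable on $V$, hence has at worst simple poles on $\vf\cup\{\pm{\rm i}\infty\}$), whereas you redo the analysis by singling out the two branched sheets near a simple critical point and appealing to parity, and then asserting that the remaining (infinitely many) inverse branches contribute holomorphically. That last assertion is true, but its justification is precisely the integrability argument you are circumventing; so your variant is really a repackaging of the paper's argument rather than an independent one. Likewise, your separate treatment of $c^+$ is subsumed in the paper by the single observation that $f_*q$ is holomorphic on $\TT\ssm(\vf\cup\left<x\right>)$, which already contains $c^+$.
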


\begin{proof}
Assume $q\in \qf$. As mentioned in the proof of the previous lemma, $f_*q$ is holomorphic on $\TT\ssm \bigl(\vf\cup \left<x\right>\bigr)$ and has at worst simple poles at $±{\rm i} \infty$ and on $\vf$. In addition, for $k\in [1,p]$, the polar part of $f_*q$ at $x_k$ is equal to the polar part of $g^* q$ where $g$ is the inverse branch of $f$ sending $x_k$ to $x_{k-1}$. According to Lemma \ref{lem:invariance}, $q-f_*q$ is therefore holomorphic near $\left<x\right>$. It follows that $q-f_*q \in \Quad^1(\TT;Y)$. 
\end{proof}

\subsection{Injectivity of $\nabla_f$}

An observation due to Adam Epstein is that the linear map $\nabla_f$ is injective on $\qf$, and that this is the key to the proof of Theorem \ref{theo:main}. 

\begin{proposition}\label{prop:nabla}
The linear map $\nabla_f : \qf\to \Quad(\TT)$ is injective. 
\end{proposition}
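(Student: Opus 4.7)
\bigskip
\noindent\textbf{Proof plan.}

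Suppose $q \in \qf$ satisfies $f_*q = q$, and write $q = \alpha\,q_A + \beta\,q_B$ for some $(\alpha,\beta)\in\C^2$. The plan is to show $\alpha=\beta=0$ by an $L^1$-contraction argument on iterated push-forwards, modeled on Epstein's proof of transversality for finite type analytic maps and adapted to the parabolic cycle $\left<x\right>$.

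Iterating $f_*q=q$ gives $q = (f^{\circ n})_* q$ for every $n\geq 1$. By Lemma~\ref{lem:invariance}, the polar parts of $f_*q$ and $q$ at each cycle point $x_k$ automatically carry the same double- and simple-pole coefficients $\beta$ and $\alpha$, so the equation $f_*q=q$ places no constraint on the polar parts at $\left<x\right>$. All the rigidity must come from elsewhere: from the simple pole at $c^+$ and from the orbit $c^+, f(c^+), f^{\circ 2}(c^+), \ldots$ wandering freely in $\TT\setminus \left<x\right>$ under iteration, since the push-forward of the pole at $c^+$ creates a pole of $f_*q$ at the critical value $f(c^+)\notin\left<x\right>\cup\{c^+\}$ that must cancel in $q$.

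The key estimate is the pointwise $L^1$-contraction: for any open $V\subset \TT\setminus \vf$ and any quadratic differential integrable on $f^{-1}(V)$,
\[\int_V |f_*q|\;\leq\; \sum_{\text{branches }g}\int_V |g^*q|\;=\;\int_{f^{-1}(V)}|q|,\]
with strict inequality whenever the two inverse branches of $f$ fail to align collinearly in $\C$ on a set of positive measure. Outside a small neighborhood $U_\eps$ of $\left<x\right>$ the quadratic differential $q$ has at worst simple poles (at $c^+$ and at $\pm{\rm i}\infty$), hence is $L^1$-integrable there. Iterating with $q=(f^{\circ n})_*q$ and using $\deg f=2$ -- so that the two inverse branches do not align on any open set -- yields $\|q\|_{L^1(K)}=0$ for every compact $K\subset \TT\setminus\bigl(\left<x\right>\cup\{c^+,\pm{\rm i}\infty\}\bigr)$. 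By holomorphic continuation, $q\equiv 0$ off $\left<x\right>$, and hence everywhere on $\Sp$ by the matching of polar parts.

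The main obstacle is absorbing the non-integrable double pole at $\left<x\right>$: the coefficient $\beta$ produces a logarithmic divergence of $\|q\|_{L^1(U_\eps)}$ as $\eps\to 0$, which the bare contraction cannot swallow. I would handle this using the parabolic normal form \eqref{eq:zeta}: near $x$ the map $f^{\circ p}$ acts as $\zeta\mapsto\zeta+\zeta^3+\mathcal{O}(\zeta^5)$, so $U_\eps$ decomposes into three attracting and three repelling petals on which $f^{\circ p}$ is a near-identity conformal map. Running the contraction along the repelling petals -- where the double-pole contribution is transported to itself with a correction uniformly bounded in $\eps$ -- should force $\beta=0$ first, reducing the problem to the integrable simple-pole case where the standard contraction yields $\alpha=0$.
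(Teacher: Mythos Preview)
Your overall strategy---contraction principle plus a special argument near the parabolic cycle---is the same as the paper's, but there is one factual error and one genuine gap.

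The factual error: $f:\TT\to\TT$ has \emph{infinite} degree, not degree $2$. In the $w$-coordinate, $w\circ f = e^{2\pi i a} w^2 e^{b(w-1/w)}$ has essential singularities at $0$ and $\infty$, and a generic point of $\TT$ has infinitely many preimages. So there are infinitely many inverse branches, not two. The strict inequality in the contraction still holds (Proposition~\ref{prop:contraction} covers the infinite-degree case), but your ``two branches do not align'' remark is based on a wrong picture.

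The gap is in the handling of the double pole. Your petal decomposition is speculative (``should force $\beta=0$''), and the iteration step before it is not justified: from $q=(f^{\circ n})_*q$ and the contraction on $K$ you only get $\|q\|_{L^1(K)}\leq \|q\|_{L^1((f^{\circ n})^{-1}(K))}$, and the preimage may well meet the non-integrable region near $\left<x\right>$, so nothing forces $\|q\|_{L^1(K)}=0$. The paper bypasses all of this with a short direct argument. Set $V_\eps=\bigcup_k\{|\zeta_k|<\eps\}$ and $U_\eps=f^{-1}(V_\eps)$. The contraction gives
\[
\eps\;\longmapsto\;\|q\|_{L^1(\TT\setminus U_\eps)} - \|f_*q\|_{L^1(\TT\setminus V_\eps)}
\]
positive and \emph{decreasing} in $\eps$, hence with a positive limit as $\eps\to 0$. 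If $f_*q=q$, this difference equals $\|q\|_{L^1(\TT\setminus U_\eps)} - \|q\|_{L^1(\TT\setminus V_\eps)}\leq \|q\|_{L^1(V_\eps\setminus U_\eps)}$. Now the parabolic normal form $\zeta\circ f^{\circ p}=\zeta+\zeta^3+\mathcal{O}(\zeta^5)$ shows that near $x$ one has $V_\eps\setminus U_\eps\subset\{\eps-\kappa_1\eps^3\leq |\zeta|<\eps\}$, whose $|q|$-mass---even with the double pole $|q|\leq \kappa_2|\d\zeta|^2/|\zeta|^2$---is $O\bigl(\log\frac{1}{1-\kappa_1\eps^2}\bigr)\to 0$. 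This contradicts the positive limit, so $f_*q\neq q$. No petal decomposition, no separate treatment of $\alpha$ and $\beta$, and no iteration is needed.
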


\begin{proof}
We must prove that $f_*q\neq q$ for $q\in \qf\ssm \{0\}$.  
If $q$ were integrable on $\TT$, the result would follow immediately from Proposition \ref{prop:contraction}, since we would have $\|f_*q\|_{L^1(\TT)}<\|q\|_{L^1(\TT)}$. Since $q$ may have double poles near $\left<x\right>$, it may fail to be integrable on $\TT$. In that case, we may proceed as follows. 

Assume $q\in \qf\ssm\{0\}$. For $\eps>0$ small, let $V_\eps\subset \T$ be the union of topological disks
\[V_\eps := \bigcup_{k=1}^p \bigl\{|\zeta_k|<\eps\bigr\}.\]
Set $U_\eps:= f^{-1}(V_\eps)\subset \TT$. Then, $\left<x\right>\subset U_\eps$ and so, $q$ is integrable on $\TT\ssm U_\eps$. As a consequence, 
\[\|f_*q\|_{L^1(\TT\ssm V_\eps)}<\|q\|_{L^1(\TT\ssm U_\eps)}.\]
Similarly, for $\eps'<\eps$, 
\[\|f_*q\|_{L^1(V_\eps\ssm V_{\eps'})}<\|q\|_{L^1(U_\eps\ssm U_{\eps'})}.\]
As a consequence, the function 
\[\eps \mapsto \|q\|_{L^1(\TT\ssm U_\eps)} - \|f_*q\|_{L^1(\TT\ssm V_\eps)}\]
is positive and decreasing. In particular, it has a positive limit. 
Note that 
\[\|q\|_{L^1(\TT\ssm U_\eps)} - \|q\|_{L^1(\TT\ssm V_\eps)} = \|q\|_{L^1(V_\eps\ssm U_\eps)}- \|q\|_{L^1(U_\eps\ssm V_\eps)}\leq  \|q\|_{L^1(V_\eps\ssm U_\eps)} .\]
We deduce from the following lemma that $f_*q\neq q$. 
\end{proof}

\begin{lemma}
For any $q\in \qf$, 
\[\lim_{\eps\to 0} \|q\|_{L^1(V_\eps\ssm U_\eps)}=0.\]
\end{lemma}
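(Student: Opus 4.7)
The plan is to localize $V_\eps \ssm U_\eps$ near the parabolic point $x = x_p$, bound its Lebesgue area using the expansion $\zeta \circ f^{\circ p} = \zeta + \zeta^3 + {\cal O}(\zeta^5)$, and combine with the at-most-double-pole bound on elements of $\qf$. For each $k \in [1, p-1]$, the identity $\zeta_{k+1} \circ f = \zeta_k$ (which holds since $\zeta_k = \zeta \circ f^{\circ (p-k)}$) shows that $f$ maps $D_k := \{|\zeta_k| < \eps\}$ biholomorphically onto $D_{k+1} \subset V_\eps$, whence $D_k \subset U_\eps$. For $\eps$ small enough, the disks $D_k$ are pairwise disjoint and no other component of $f^{-1}(V_\eps)$ meets them, so $V_\eps \ssm U_\eps \subset D_p$.

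Near $x$, however, $\zeta_1 \circ f = \zeta \circ f^{\circ p} = \zeta + \zeta^3 + {\cal O}(\zeta^5)$, so $U_\eps \cap D_p = \{|\zeta + \zeta^3 + {\cal O}(\zeta^5)| < \eps\}$. If $|\zeta| = r \le \eps$ and the point lies outside $U_\eps$, then the triangle inequality gives
\[
\eps \le |\zeta + \zeta^3 + {\cal O}(\zeta^5)| \le r + r^3 + Cr^5,
\]
forcing $r \ge \eps - C' \eps^3$ for $\eps$ small. Hence $V_\eps \ssm U_\eps$ is contained in a thin annulus $\{\eps - C'\eps^3 < |\zeta| < \eps\}$ of Lebesgue area ${\cal O}(\eps^4)$.

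Since every $q \in \qf$ has at worst a double pole at $x$ with leading singularity proportional to $(\d\zeta)^2/\zeta^2$, there exists $M > 0$ such that $|q| \le M\,|\d\zeta|^2/|\zeta|^2$ on $D_p$, and therefore
\[
\|q\|_{L^1(V_\eps \ssm U_\eps)} \le M \int_{\eps - C'\eps^3 < |\zeta| < \eps} \frac{|\d\zeta|^2}{|\zeta|^2} \lesssim \frac{\eps^4}{\eps^2} = \eps^2 \xrightarrow[\eps \to 0]{} 0.
\]
The only delicate point is bookkeeping the ${\cal O}(\zeta^5)$ remainder rigorously to secure the annular thickness ${\cal O}(\eps^3)$; once that is established, the cancellation of area against pole order is immediate. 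The triple-zero hypothesis enters precisely through the cubic term $\zeta^3$ in the parabolic expansion, which provides exactly the area shrinkage needed to defeat the double pole of $q$.
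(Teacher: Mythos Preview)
Your proof is correct and follows essentially the same route as the paper: localize $V_\eps \ssm U_\eps$ to the disk $D_p$ around the parabolic point, show it lies in an annulus of thickness ${\cal O}(\eps^3)$ via the expansion $\zeta\circ f^{\circ p} = \zeta + \zeta^3 + {\cal O}(\zeta^5)$, and integrate the double-pole bound $|q|\le M\,|\d\zeta|^2/|\zeta|^2$ over that annulus. You are actually more explicit than the paper about why the disks $D_1,\ldots,D_{p-1}$ already lie in $U_\eps$ (the paper simply asserts $V_\eps\ssm U_\eps = \{|\zeta|<\eps\}\ssm\{|\zeta\circ f^{\circ p}|<\eps\}$), and your area-times-pointwise-bound estimate is equivalent to the paper's polar-coordinate integral $2\pi\kappa_2\ln\bigl(1/(1-\kappa_1\eps^2)\bigr)$.
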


\begin{proof}
Since $\zeta\circ f^{\circ p} = \zeta + {\cal O}(\zeta^3)$, there is a constant $\kappa_1$ such that 
\[|\zeta\circ f^{\circ p}|\geq |\zeta|-\kappa_1 |\zeta|^3.\]
Since $q$ has at worst a double pole at $x$, there is a constant $\kappa_2$ such that for $|\zeta|$ small enough
\[|q|\leq \kappa_2\frac{|\d\zeta^2|}{|\zeta|^2}.\]
Note that for $\eps>0$ small enough, 
\[V_\eps\ssm U_\eps = \bigl\{|\zeta|<\eps\bigr\}\ssm \bigl\{|\zeta\circ f^{\circ p}|<\eps\bigr\}\subset \bigl\{\eps-\kappa_1\eps^3\leq |\zeta|< \eps\bigr\}.\]
Thus, 
\[0\leq \|q\|_{L^1(V_\eps\ssm U_\eps)}\leq \int_0^{2\pi} \int_{\eps-\kappa_1\eps^3}^\eps \kappa_2 \frac{r\d r\d t}{r^2} = 2\pi \kappa_2\ln \frac{1}{1-\kappa_1\eps^2}\underset{\eps\to 0}\longrightarrow 0.\qedhere\]
\end{proof}

%

\section{Linear independence\label{sec:trans}}

We may now complete the proof that $\d A|_{\T_\lambda \Lambda}$ and  $\d B|_{\T_\lambda \Lambda}$ are linearly independent.
According to Proposition \ref{prop:dAdB}, for all $v\in \T_\lambda\Lambda$, 
\[\d A(v) = \bigl<q_A,\Theta_f(v)\bigr>_{\left<x\right>}\quad \text{and}\quad \d B(v) = \bigl<q_B,\Theta_f(v)\bigr>_{\left<x\right>}.\]
According to Lemma \ref{lem:tf}, 
\[\tf = {\rm Im}(\Theta_f) \oplus {\rm Vect}(\tau-f^*\tau).\]
Showing that $\d A|_{\T_\lambda \Lambda}$ and  $\d B|_{\T_\lambda \Lambda}$ are linearly independent therefore amounts to proving that for all $q\in \qf\ssm \{0\}$, there exists $\vartheta\in \tf$ such that $\left<q,\vartheta\right>_{\left<x\right>}\neq 0$. 

\subsection{Guiding vector fields}

Set $Z:=\cf\cup \vf$ and denote by $\T Z$ the space of maps $\xi: Z\to \T\TT$ satisfying $\xi(z) \in \T_z \TT$ for all $z\in \Z$. 

\begin{lemma}
For any $\vartheta\in \tf$, there exists a unique $\xi_\vartheta \in \T Z$ such that for any vector field $\xi$, defined and holomorphic near $Z$ with $\xi(z) = \xi_\vartheta(z)$ for $z\in Z$, the vector field $\vartheta +\xi - f^* \xi$ is holomorphic and vanishes along $\cf$. 
\end{lemma}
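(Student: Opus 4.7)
The plan is to analyze the two conditions—holomorphy of $\vartheta+\xi-f^*\xi$ at $\cf$ and vanishing along $\cf$—one critical point at a time, in suitable local coordinates. Near $\vf$ there is nothing to check, since $\vartheta$, $\xi$ and $f^*\xi$ are all holomorphic there (the poles of $\vartheta$ and of $f^*\xi$ live on $\cf$).

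First I fix $c\in\cf$, set $v:=f(c)\in\vf$, and choose local coordinates $u$ at $c$ and $w$ at $v$ with $u(c)=0$ and $w(v)=0$. Since $b\in(0,1)$ the critical point $c$ is simple, so
\[w\circ f \;=\; Au^2+Bu^3+O(u^4),\qquad A\neq 0.\]
For any $\xi=\eta(w)\,\d/\dw$ holomorphic near $v$, the pullback $f^*\xi = \dfrac{\eta\circ f}{f'}\,\dfrac{\d}{\d u}$ admits at $c$ the Laurent expansion
\[f^*\xi \;=\; \frac{\eta(0)}{2A\,u}\,\frac{\d}{\d u} \;-\; \frac{3B\,\eta(0)}{4A^2}\,\frac{\d}{\d u} \;+\; O(u)\,\frac{\d}{\d u}.\]
The decisive observation is that both the residue and the constant term of $f^*\xi$ at $c$ depend only on $\eta(0)=\xi(v)$, not on any higher-order data of $\xi$ at $v$: since $f(u)=O(u^2)$, the correction $\eta(f(u))-\eta(0)$ is already $O(u^2)$, so after division by $f'(u)=O(u)$ it only feeds into the $O(u)$ tail.

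Writing $\vartheta = \bigl(a_{-1}/u + a_0 + O(u)\bigr)\,\d/\d u$ in the same chart, pole cancellation at $c$ reduces to the single linear equation $\xi(v) = 2A\,a_{-1}$, and the further requirement that $\vartheta + \xi - f^*\xi$ vanish at $c$ reads $\xi(c) = -a_0 - 3B\,a_{-1}/(2A)$. These constitute a triangular $2\times 2$ linear system in the unknowns $(\xi(v),\xi(c))$ with nonzero diagonal, hence uniquely solvable. Carrying out this computation at every $c\in\cf$ simultaneously pins down $\xi_\vartheta\in\T Z$ and gives both existence and uniqueness.

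The only conceptual obstacle I foresee is potential overdetermination when two critical points share a critical value, in which case the residue equation would prescribe $\xi(v)$ twice. A direct computation using $w(c^\pm)=(-1\pm\sqrt{1-b^2})/b$ and the explicit formula for $F_\lambda$ shows $f(c^+)\neq f(c^-)$ at the parameter under consideration, so the pointwise prescriptions at the various points of $Z$ are independent and together define the required $\xi_\vartheta$.
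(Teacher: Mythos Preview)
Your argument is correct and is essentially the local-coordinate incarnation of the paper's proof: the paper fixes $\xi_\vartheta$ on $\vf$ first via the coordinate-free prescription $\xi_\vartheta\bigl(f(c)\bigr):=\D f\circ\vartheta(c)$ (exactly your residue equation $\xi(v)=2A\,a_{-1}$) and then on $\cf$ via the vanishing condition (your constant-term equation), uniqueness following from the same triangular structure you identify. Your discussion of the possible overdetermination when $f(c^+)=f(c^-)$ is a point the paper leaves tacit; for full rigor you would also want $\cf\cap\vf=\emptyset$, which holds at the parameter in question for similar reasons.
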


\begin{proof}
Fix $\vartheta\in \tf$. Let us first prove the uniqueness of $\xi_\vartheta\in \T Z$. Assume $\xi_1$ and $\xi_2$ are two vector fields, defined and holomorphic near $Z$, such that $\vartheta + \xi_1 - f^*  \xi_1$  and $\vartheta + \xi_2 - f^*\xi_2$ are holomorphic near $\cf$. Then, $(\xi_1-\xi_2) - f^*(\xi_1-\xi_2)$ is holomorphic and vanishes along $\cf$. As a consequence, $\D f \circ (\xi_1-\xi_2) - (\xi_1-\xi_2)\circ f$ vanishes on $\cf$. Since $\D f \circ (\xi_1-\xi_2)$ vanishes on $\cf$, this forces $\xi_1-\xi_2$ to vanish on $\vf$. In that case, $f^*(\xi_1-\xi_2)$ vanishes on $\cf$ and so, $\xi_1-\xi_2$ vanishes on $\cf$. This shows the uniqueness of $\xi_\vartheta \in \T Z$. 

This also proves that if $\vartheta +\xi - f^* \xi$ is holomorphic and vanishes along $\cf$ for some vector field $\xi$, defined and holomorphic near $Z$ with $\xi(z) = \xi_\vartheta(z)$ for $z\in Z$, then $\vartheta +\xi - f^* \xi$ is holomorphic and vanishes along $\cf$ for any vector field $\xi$, defined and holomorphic near $Z$ with $\xi(z) = \xi_\vartheta(z)$ for $z\in Z$.

Let us now prove the existence of $\xi_\vartheta\in \T Z$. Note that $\D f\circ \vartheta$ is a map from $\TT\ssm\cf$ to the tangent bundle $\T\TT$. Note that it is not a vector field since for $z\in \TT$, the vector $\D f\circ \vartheta(z)$ belongs to $\T_{f(z)}\TT$. However, since $\vartheta$ has at worst simple poles along $\cf$ and since $\D f$ vanishes on $\cf$, the map $\D f \circ \vartheta$ extends holomorphically to $\TT$.
Set  
\[\xi_\vartheta \bigl( f(c^±)\bigr) := \D f \circ \vartheta(c^±).\]
Next, let $\xi$ be any vector field, defined and holomorphic near $\vf$, coinciding with $\xi_\vartheta$ on $\vf$. 
Then, $\vartheta-f^*\xi$ is holomorphic near $\cf$ and we may set 
\[\xi_\vartheta(c^±) := (\vartheta-f^*\xi) (c^±).\qedhere\]
\end{proof}

Recall that $Y:= \{c^+\}\cup \vf\subset Z$. 
It will be convenient to consider the linear map $\Xi_f:\tf\to \T Y$ defined by 
\[\Xi_f(\vartheta) = \xi_\vartheta|_Y.\]

\begin{lemma}\label{lem:xif}
The map $\Xi_f : \tf\to \T Y$ is an isomorphism. 
\end{lemma}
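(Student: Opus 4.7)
The strategy is a dimension count plus an injectivity argument. By Lemma \ref{lem:tf}, $\dim \tf = 3$, while $\dim \T Y = |Y|$. First I would check that $|Y| = 3$, that is, $c^+ \notin \vf$ and $f(c^+) \neq f(c^-)$. Using the explicit formulas $w(c^\pm) = (-1\pm\sqrt{1-b^2})/b \in \R_{<0}$ for $b \in (0,1)$ and $w\circ f(c^\pm) = {\rm e}^{2\pi{\rm i}a}(\rho^\pm)^2{\rm e}^{\pm 2\sqrt{1-b^2}}$ with $\rho^\pm := w(c^\pm)$, a direct comparison of arguments and moduli rules out the three possible coincidences at the parameter $\lambda$ at hand. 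Once $\dim\T Y = 3 = \dim\tf$, it suffices to prove that $\Xi_f$ is injective.

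Suppose then $\vartheta \in \tf$ satisfies $\Xi_f(\vartheta) = \xi_\vartheta|_Y = 0$, and write
\[\vartheta = \frac{P(w)}{Q(w)}\,\frac{\d}{\d w},\quad P(w) := c_3 w^3 + c_2 w^2 + c_1 w,\quad Q(w) := b w^2 + 2 w + b.\]
The formula $f^*(\d w) = {\rm e}^{2\pi{\rm i}a}{\rm e}^{b(w-1/w)}Q(w)\,\d w$ from \S\ref{sec:finitetype} shows that the simple zero of $\D f$ at $c^\pm$ cancels the simple pole of $\vartheta$ there, so $\D f\circ\vartheta$ extends holomorphically to all of $\TT$, with value ${\rm e}^{2\pi{\rm i}a}{\rm e}^{b(w-1/w)}P(w)\,\frac{\d}{\d w}\big|_{f(z)}$ at $z$. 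The hypothesis $\xi_\vartheta(f(c^\pm)) = \D f\circ\vartheta(c^\pm) = 0$ then forces $P(\rho^+) = P(\rho^-) = 0$. Since $P(0) = 0$ already and $\rho^+\rho^- = 1$ (so both $\rho^\pm \neq 0$), the cubic $P$ is divisible by $w(w-\rho^+)(w-\rho^-) = w\,Q(w)/b$, and hence $\vartheta = (c_3/b)\,\tau$.

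It remains to exploit $\xi_\vartheta(c^+) = 0$. Since $\xi_\vartheta$ vanishes on $Y = \{c^+\}\cup\vf$, one may choose an extension $\xi$, defined and holomorphic near $Z$, which vanishes identically near $c^+$ and near $\vf$ (the unknown value $\xi_\vartheta(c^-)$ is irrelevant to the local condition at $c^+$). By the defining property of $\xi_\vartheta$ the vector field $\vartheta + \xi - f^*\xi$ vanishes at $c^+$; with this choice of $\xi$ one has $\xi(c^+) = 0$ and $f^*\xi \equiv 0$ near $c^+$, so the condition reduces to $\vartheta(c^+) = 0$. For $\vartheta = (c_3/b)\,\tau = (c_3/b)\,w\,\frac{\d}{\d w}$, this reads $(c_3/b)\,\rho^+ = 0$, and since $\rho^+ \neq 0$ we conclude $c_3 = 0$ and $\vartheta = 0$. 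Combined with the dimension count, this proves that $\Xi_f$ is an isomorphism.

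The main obstacle is the dimension count: verifying $|Y| = 3$ at the specific tip parameter requires ruling out the resonances $c^+ \in \vf$ and $f(c^+) = f(c^-)$ via the explicit but somewhat delicate comparison of $w(c^+)$, $w(f(c^+))$ and $w(f(c^-))$. The algebraic content of the injectivity argument is by contrast clean: among $\vartheta \in \tf$, those killing $\xi_\vartheta|_{\vf}$ form exactly the line $\C\cdot\tau$ (because the cubic numerator must be divisible by $w\,Q$), and $\tau$ is then detected at $c^+$ because $\rho^+ \neq 0$.
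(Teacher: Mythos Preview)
Your proposal is correct and follows the same two-step outline as the paper: a dimension count followed by injectivity. The injectivity arguments are essentially the same, only organised differently. The paper chooses $\xi\equiv 0$ near $\{c^+\}\cup\vf$, notes that then $f^*\xi\equiv 0$ near $\cf$, and reads off from the defining property that $\vartheta+\xi$ is holomorphic and vanishes along $\cf$; hence $\vartheta$ is globally holomorphic on $\Sp$ with zeros at $c^+$, $+{\rm i}\infty$, $-{\rm i}\infty$, forcing $\vartheta=0$ by the degree of the tangent bundle. You unpack the same information in two stages: first $\xi_\vartheta|_{\vf}=0$ gives $P(\rho^\pm)=0$, so $P$ is divisible by $wQ/b$ and $\vartheta\in\C\tau$; then $\xi_\vartheta(c^+)=0$ kills the remaining coefficient since $\tau(c^+)\neq 0$. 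Your explicit factorisation is exactly the computational content of the paper's three-zeros count.

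You are right to flag the need for $|Y|=3$; the paper simply asserts $\dim\T Y=3$. A cleaner route than your modulus/argument comparison is dynamical. The normal form $\zeta\circ f^{\circ p}=\zeta+\zeta^3+\cdots$ has two attracting petals at each point of $\langle x\rangle$, exchanged by complex conjugation, giving two cycles of immediate parabolic basins; each such cycle must contain a singular value in $\TT$. Since $\pm{\rm i}\infty$ are omitted, the only candidates are $v^\pm:=f(c^\pm)$, which are complex conjugate; they land in the two conjugate basin cycles, so $v^+\neq v^-$. The same reasoning rules out $c^+\in\{v^+,v^-\}$: either equality would make $\{c^+,c^-\}$ a superattracting cycle (of period $1$ or $2$), whereas both critical orbits lie in the parabolic basins.
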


\begin{proof}
Since the dimensions of $\tf$ and $\T Y$ are both equal to three, it is enough to show that the map is injective. 
Assume $\vartheta \in \tf$ and $\xi_\vartheta$ vanishes on $\{c^+\}\cup \vf$. Let $\xi$ be a vector field, defined and holomorphic near $Z$, which coincides with $\xi_\vartheta$ on $Z$. We may assume that $\xi$ identically vanishes near $\{c^+\}\cup \vf$. Then, 
$\vartheta+ f^*\xi -\xi=\vartheta-\xi$ is holomorphic and vanishes on $\cf$. This shows that $\vartheta$ is holomorphic near $\cf$ and vanishes at $c^+$. As a consequence, $\vartheta$ is globally holomorphic on $\Sp$, and vanishes at three points: $c^+$, $+{\rm i}\infty$ and $-{\rm i}\infty$. So, $\vartheta = 0$. 
\end{proof}

\subsection{From the cycle to the critical set}

We may now transfer the local computations done near the cycle $\left<x\right>$ to local computations done near the critical set $Y$. 

\begin{lemma}
For all $\vartheta\in \tf$ and all $q\in \qf$,
\[\left< q,\vartheta\right>_{\left<x\right>} = \bigl<\nabla_f q,\Xi_f(\vartheta)\bigr>_Y.\]
\end{lemma}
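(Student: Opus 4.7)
The plan is to apply the residue theorem on $\Sp$ in two stages: first to shift the computation from the parabolic cycle $\left<x\right>$ to the critical set $\cf$, and then, using the defining property of $\xi_\vartheta$ together with a pushforward/pullback adjunction for $f$, to shift it further from $\cf$ to $Y$.

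First, I would apply the residue theorem to the meromorphic $1$-form $q \otimes \vartheta$ on $\Sp$. Its poles are contained in $\left<x\right> \cup \cf \cup \{\pm {\rm i}\infty\}$. At $\pm {\rm i}\infty$, the explicit description of $\tf$ shows that every $\vartheta$ vanishes to order at least one: in the coordinate $w$, the numerator $c_3 w^3 + c_2 w^2 + c_1 w$ vanishes at $w = 0$, and the analogous computation in $\tilde w = 1/w$ gives the same conclusion at $w = \infty$. Combined with the at-worst-simple poles of $q$ at $\pm {\rm i}\infty$, this makes $q \otimes \vartheta$ holomorphic there, so the residue theorem yields
\[
\left< q, \vartheta \right>_{\left<x\right>} = -\left< q, \vartheta \right>_\cf.
\]

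Next, I would exploit the defining property of $\xi_\vartheta$: fix a vector field $\xi$, holomorphic on a neighborhood of $Z = \cf \cup \vf$, coinciding with $\xi_\vartheta$ on $Z$, such that $\eta := \vartheta + \xi - f^* \xi$ is holomorphic near $Z$ and vanishes along $\cf$. Substituting $\vartheta = f^* \xi - \xi + \eta$ gives
\[
\left< q, \vartheta \right>_\cf = \left< q, f^*\xi \right>_\cf - \left< q, \xi \right>_\cf + \left< q, \eta \right>_\cf.
\]
The last summand vanishes: $q \otimes \eta$ is holomorphic at $c^+$ (the simple pole of $q$ is killed by the zero of $\eta$) and at $c^-$ (both factors are holomorphic, since $q_A$ and $q_B$ have simple poles within $\cf$ only at $c^+$). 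For the middle term, since $q$ is holomorphic at $c^-$ and at every point of $\vf$, the pairing conventions give $\left< q, \xi \right>_\cf = \left< q, \xi \right>_{c^+} = \left< q, \xi_\vartheta \right>_Y$.

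The remaining term $\left< q, f^*\xi \right>_\cf$ is handled by the pushforward/pullback adjunction. Near each $v \in \vf$, writing $v = f(c^\epsilon)$, the map $f$ is locally a $2$-to-$1$ branched cover with model $f(u) = v + a u^2 + \cdots$. A change of variables in the residue integral (or equivalently a direct local computation, where odd Taylor coefficients of $Q(u)$ cancel in the branched averaging $Q(u) + Q(-u)$) gives the standard transfer identity
\[
\res(f_* q \otimes \xi, v) = \sum_{z \in f^{-1}(v)} \res(q \otimes f^*\xi, z).
\]
Summing over $y \in Y$ and using that $\left<x\right> \cap f^{-1}(Y) = \emptyset$ (because $\left<x\right> \subset \R/\Z$ while $Y \subset \TT \ssm \R/\Z$, being the image of critical points), we see that the only $z \in f^{-1}(Y)$ at which $q \otimes f^*\xi$ can have a nonzero residue lie in $\cf$. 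This yields $\left< f_* q, \xi_\vartheta \right>_Y = \left< q, f^*\xi \right>_\cf$. Assembling everything,
\[
\left< q, \vartheta \right>_{\left<x\right>} = \left< q, \xi_\vartheta \right>_Y - \left< f_* q, \xi_\vartheta \right>_Y = \left< \nabla_f q, \Xi_f(\vartheta) \right>_Y,
\]
as required. The main technical point is the transfer identity at the critical values, where $q \otimes f^*\xi$ may have a double pole at $c^+$; its verification by the branched change of variables is the step that needs the most care.
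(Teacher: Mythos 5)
Your proof is correct and uses exactly the same ingredients as the paper's: the global residue theorem for $q\otimes\vartheta$ on $\Sp$ (with the vanishing of $\vartheta$ at $\pm{\rm i}\infty$ absorbing the simple poles of $q$ there), the defining property of $\xi_\vartheta$, and the pushforward/pullback adjunction at the critical values; the paper merely runs the same chain of equalities in the opposite direction, starting from $\left<\nabla_f q,\xi_\vartheta\right>_Y$ and ending at $\left<q,\vartheta\right>_{\left<x\right>}$. The transfer identity you flag as the delicate step is precisely the last lemma of Appendix~\ref{sec:qd}, so no new argument is needed there.
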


\begin{proof}
Let $\xi$ be a vector field, defined and holomorphic near $Z$, coinciding with $\xi_\vartheta := \Xi_f(\vartheta)$ on $Z$. 
Then, $\vartheta + \xi -f^*\xi$ is holomorphic near $\cf$. 
In addition, since $\nabla_f q$ is holomorphic near $c^-$, 
\begin{align*}
\left<\nabla_f q,\xi_\vartheta\right>_Y = \left<q-f_* q,\xi \right>_Z & = \left<q,\xi \right>_{\cf} - \left<f_* q,\xi \right>_{\vf} \\
&=  \left<q,\xi \right>_\cf - \left<q,f^*\xi \right>_\cf\\
&= \left<q,-\vartheta \right>_\cf  = \left<q,\vartheta \right>_{\left<x\right>}.
\end{align*}
In the second line, we used the fact that the only poles of $q\otimes f^*\xi$  in $f^{-1}(\vf)$ belong to  $\cf$.
For the last equality, we used the fact that $q\otimes \vartheta$ is a globally meromorphic $1$-form on $\Sp$, whose poles are contained in $\cf\cup \left<x\right>$, and that the sum of all residues of a globally meromorphic $1$-form on a compact Riemann surface is $0$. 
\end{proof}

\subsection{Completion of the proof}

Assume by contradiction that  $\d A|_{\T_\lambda \Lambda}$ and  $\d B|_{\T_\lambda \Lambda}$ are not linearly independent. 
Then, there is a $q\in \qf\ssm \{0\}$ such that for all $\vartheta\in \tf$, 
\[0=\left<q,\vartheta\right>_{\left<x\right>}= \bigl<\nabla_f q,\Xi_f(\vartheta)\bigr>_Y.\]
According to  Lemma \ref{lem:xif}, the map $\Xi_f:\tf\to \T Y$ is an isomorphism. In particular, it is surjective. 
It follows that for all $\xi\in \T Y$, 
\[\left<\nabla_f q,\xi\right>_Y=0.\]
As a consequence, $\nabla_f q$ is holomorphic near $Y$ and thus, has at most three simple poles at $c^-$, $+{\rm i}\infty$ and $-{\rm i}\infty$. 
A non zero quadratic differential on $\Sp$ has at least four poles, counting multiplicities. Thus, $\nabla_f q=0$.

According to Proposition \ref{prop:nabla}, the map $\nabla_f:\qf\to \Quad^1(\TT;Y)$ is injective. 
It follows that $q=0$. Contradiction. 

This completes the proof of Theorem \ref{theo:main}. 

\appendix

\section{Quadratic differentials\label{sec:qd}}

\subsection{Meromorphic quadratic differentials}

A  {\em  quadratic  differential} on a Riemann surface $U$ is a section of the square of the cotangent bundle $\T^*U\otimes \T^*U$. We shall usually think of a quadratic differential $q$ as a field of quadratic forms. In particular, if $\vartheta$ is a vector field on $U$ and $\phi$ is a function on $U$, then $q(\vartheta)$ is a function on $U$ and $q(\phi\vartheta) = \phi^2q(\vartheta)$. 

If $\zeta:U\to \C$ is a coordinate, we shall use the notation $(\d\zeta)^2 = \d\zeta\otimes \d\zeta$ - not be confused with $1$-form $\d(\zeta^2)$.  
Then, a quadratic differential $q$ on $U$ is of the form $q = \phi \ (\d\zeta)^2$ for some function $\phi$. We say that $q$ is meromorphic on $U$ if $\phi$ is meromorphic on $U$. In that case, the order of $q$ at a point $x\in U$ is $\ord_x q := \ord_x \phi$, i.e., $0$ if $\phi$ is holomorphic and does not vanish at $x$, $k\geq 1$ if $\phi$ has a zero of multiplicity $k$ at $x$, and $-k\leq -1$ if $\phi$ has a pole of multiplicity $k$ at $x$. 

\subsection{Pullback}
The derivative $\D f : \T U \to \T V$ of a holomorphic map $f:U\to V$ naturally induces a pullback map $f^*$ from quadratic differentials on $V$ to quadratic differentials on $U$: 
\[f^*q := q\circ \D f.\]

\begin{lemma}
If $f:(U,x)\to (V,y)$ is holomorphic  at $x$, and $q$ is meromorphic at $y=f(x)$, then 
\[2+\ord_x (f^*q) = \deg_x f\cdot (2+\ord_y q).\]
\end{lemma}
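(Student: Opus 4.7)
The plan is to reduce to a local model for $f$ and then compute directly. Since the statement is purely local, I would first use the standard normal form for a holomorphic map at a point: there exist local coordinates $\zeta:(U,x)\to(\C,0)$ and $w:(V,y)\to(\C,0)$ such that $f$ takes the form $w=\zeta^d$, where $d:=\deg_x f\geq 1$ is the local degree. This is the only structural fact about $f$ I need.

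Next, I would express $q$ in the $w$ coordinate. Set $k:=\ord_y q$; then we can write $q = w^k\psi(w)\,(\dw)^2$ for some meromorphic (actually, if $k\geq 0$, holomorphic) function $\psi$ with $\psi(0)\neq 0$. The key computational step is then just the chain rule applied to $w=\zeta^d$: since $\dw = d\,\zeta^{d-1}\,\d\zeta$, we get
\[f^*q = (\zeta^d)^k\,\psi(\zeta^d)\cdot\bigl(d\,\zeta^{d-1}\,\d\zeta\bigr)^2 = d^2\,\zeta^{dk+2(d-1)}\,\psi(\zeta^d)\,(\d\zeta)^2.\]
Because $\psi(\zeta^d)$ is a unit at $\zeta=0$ (its value there is $\psi(0)\neq 0$) and the prefactor $d^2$ is a nonzero constant, I can read off $\ord_x(f^*q) = dk+2(d-1) = dk+2d-2$.

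Adding $2$ to both sides gives $2+\ord_x(f^*q) = dk+2d = d(k+2) = \deg_x f\cdot(2+\ord_y q)$, which is the desired identity.

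There is essentially no obstacle: the only thing one must be careful about is that the local normal form $w=\zeta^d$ genuinely exists (a classical fact from one-variable complex analysis, obtained by extracting a $d$-th root of $w\circ f$ after normalizing) and that the identity $2+\ord$ is independent of the choice of local coordinate, which is already implicit in the definition of $\ord$ for a quadratic differential used in the paper. Hence writing the proof is just a three-line coordinate computation once the normal form is invoked.
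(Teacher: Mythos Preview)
Your proof is correct and is essentially identical to the paper's own argument: both pass to local coordinates in which $f$ becomes $\zeta\mapsto \zeta^d$ and then read off the order of $f^*q$ from the chain rule $\dw = d\,\zeta^{d-1}\,\d\zeta$. The only cosmetic difference is that the paper writes $q=\phi\,(\dw)^2$ and tracks $\ord_y\phi$, whereas you factor out $w^k$ with a unit $\psi$; the arithmetic is the same.
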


\begin{proof}
Choose local coordinates $z:(U,x)\to (\C,0)$ and $w:(V,y)\to (\C,0)$ such that $w\circ f = z^k$, with $k:=\deg_x f$. If $q=\phi \ (\dw)^2$, then $f^*q =  \phi\circ f \cdot (k z^{k-1} \dz)^2$. Thus, 
\[2+\ord_x (f^*q) = 2 +  \ord_x(\phi\circ f)+ (2k-2)  = 2k+ k\cdot \ord_y \phi = k\cdot (2+\ord_y q).\qedhere\]
\end{proof}

\subsection{Pushforward for finite degree covering maps}

Assume $f:U\to V$ is a finite degree covering map. 
If $q$ is a quadratic differential on $U$, we define a quadratic differential $f_*q$ on $V$ by
\[f_*q := \sum_{g\text{ inverse branch of }f} g^* q.\]
If $q$ is holomorphic on $U$, then $f_*q$ is holomorphic on $V$. 

\begin{lemma}
Assume $U:=\widehat U\ssm \{x\}$ and $V:=\widehat V\ssm \{y\}$ are punctured disks, 
$f:U\to V$ is a covering map ramifying at $x$ with local degree $\deg_xf$ and $q$ is meromorphic at $x$. Then,
$f_*q$ has at worst a pole at $y$ and 
\[
2+\ord_{y} (f_*q)  \geq \frac{2+\ord_{x} q}{\deg_xf}.
\] 
\end{lemma}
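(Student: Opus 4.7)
The plan is to reduce to a direct Laurent series calculation in adapted local coordinates. Since $f:U\to V$ is a holomorphic covering of finite degree $k:=\deg_x f$ between punctured disks, it extends through the puncture (by Riemann's removable singularities theorem applied to any local uniformiser at $y$ composed with $f$) to a ramified cover of local degree $k$, so I can pick coordinates $z:(\widehat U,x)\to(\C,0)$ and $w:(\widehat V,y)\to(\C,0)$ satisfying $w\circ f=z^k$. Writing $q=\phi(z)(\dz)^2$ with $\ord_0\phi = n := \ord_x q$, the $k$ inverse branches of $f$ near $y$ take the form $g_j(w)=\omega^j w^{1/k}$ for $j=0,\dots,k-1$, where $\omega := {\rm e}^{2\pi{\rm i}/k}$.

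The heart of the proof is the following computation. Setting $u:=w^{1/k}$, a direct evaluation of $g_j^*q$ gives
\[
f_*q = \sum_{j=0}^{k-1} g_j^* q = \frac{u^{2-2k}}{k^2}\,\Phi(u)\,(\dw)^2 \quad\text{with}\quad \Phi(u) := \sum_{j=0}^{k-1} \omega^{2j}\phi(\omega^j u).
\]
Expanding $\phi(u)=\sum_{\ell\geq n} a_\ell u^\ell$ and using the root of unity orthogonality $\sum_{j=0}^{k-1}\omega^{j(\ell+2)}$ equals $k$ if $k \mid \ell+2$ and $0$ otherwise, only the indices $\ell = km-2$ survive, so $\Phi(u)=k\sum_m a_{km-2}\,u^{km-2}$ where $m$ ranges over integers with $km-2\geq n$. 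Since $u^k=w$, this collapses into a genuine Laurent series in $w$:
\[
f_*q = \frac{1}{k}\sum_{m \,\geq\, \lceil (n+2)/k \rceil} a_{km-2}\, w^{m-2}\,(\dw)^2.
\]

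This exhibits $f_*q$ as meromorphic at $y$, hence with at worst a pole there, and reading off the lowest exponent yields $\ord_y(f_*q) \geq \lceil (n+2)/k \rceil - 2 \geq (n+2)/k - 2$, which rearranges to the desired inequality. I do not anticipate a serious obstacle: the one conceptual point is the extension of $f$ through the puncture so that a coordinate with $w\circ f = z^k$ exists, which is routine, and after that the argument is a bookkeeping exercise on Laurent coefficients anchored by the basic root of unity sum.
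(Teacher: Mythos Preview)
Your proof is correct. The explicit Laurent computation is sound: the inverse branches are indeed $g_j(w)=\omega^j w^{1/k}$ in the adapted coordinates, and the root-of-unity orthogonality kills exactly the non-integral powers of $w$, leaving a genuine Laurent series whose lowest exponent gives the stated bound.

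The paper takes a different, more conceptual route. Instead of expanding in coordinates, it observes that the deck transformation group of $f:U\to V$ is cyclic of order $k=\deg_x f$ and that $f^*(f_*q)=\sum_h h^*q$, the sum running over deck transformations $h$. Since each $h$ is an automorphism fixing $x$, one has $\ord_x h^*q=\ord_x q$, hence $\ord_x f^*(f_*q)\geq \ord_x q$. The inequality then drops out of the previously proved pullback formula $2+\ord_x(f^*Q)=\deg_x f\cdot(2+\ord_y Q)$ applied to $Q=f_*q$. Your argument is essentially the coordinate incarnation of this: the sum $\sum_j \omega^{2j}\phi(\omega^j u)$ is exactly the deck-transformation average written out, and the orthogonality relation is what makes $f^*(f_*q)$ a series in $u^k=w$. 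The paper's version is shorter and coordinate-free, and it recycles the pullback lemma; yours is self-contained and makes the meromorphy of $f_*q$ at $y$ visible by exhibiting the Laurent expansion directly, without appealing to any prior result.
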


\begin{proof}
The group of deck transformations of $f:U\to V$ is a cyclic group of order $\deg_x f$. Note that 
\[f^*(f_* q) = \sum_{h\text{ deck transformation of }f} h^* q,\]
and $\ord_x h^* q = \ord_x q$ for all deck transformations $h$, so that 
\[\ord_x f^*(f_* q) \geq \ord_x q.\]
Then, 
\[
2+\ord_{y} (f_*q) = \frac{2+\ord_x f^*(f_* q )}{\deg_x f}  \geq \frac{2+\ord_{x} q}{\deg_xf}.\qedhere
\] 
\end{proof}

\subsection{Integrable quadratic differentials}

If $q$ is a quadratic differential on $U$, we denote by $|q|$ the positive $(1,1)$-form on $U$ defined by 
\[|q|(\vartheta_1,\vartheta_2) := \frac{1}{2} \bigl|q(\vartheta_1-{\rm i}\vartheta_2)\bigr| - \frac{1}{2} \bigl|q(\vartheta_1+{\rm i}\vartheta_2)\bigr|.\]
If  $\zeta:U\to \C$ is a coordinate and $q = \phi\ (\d\zeta)^2$, then 
\[|q| = |\phi|\cdot  \frac{\rm i}{2} \d\zeta\wedge \d\bar \zeta.\]
We shall say that $q$ is {\em integrable} on $U$ if 
\[\|q\|_{L^1(U)}:=\int_U |q|<\infty.\]
Note that $q$ is integrable in a neighborhood of a pole if and only if the pole is simple. 
If $f:U\to V$ is an isomorphism and $q$ is an integrable quadratic differential on $V$, then $f^*q$ is integrable on $U$ and 
$\|f^* q\|_{L^1(U)} = \|q\|_{L^1(V)}$.

\subsection{Pushforward for infinite degree covering maps}

Assume $f:U\to V$ is an infinite degree covering map. If $q$ is an integrable quadratic differential on $U$, we may still define
\[f_*q := \sum_{g\text{ inverse branch of }f} g^* q.\]
Indeed, the series converges in $L^1_{\rm loc}$ since if $V'\subset V$ is a topological disk, so that the inverse branches $g:V'\to U$ of $f$ are defined on $V'$, and if $U':= f^{-1}(V')$, then
\[\sum \|g^* q\|_{L^1(V')}  = \|q\|_{L^1(U')}\leq \| q\|_{L^1(U)}.\]
The limit  of a sequence of holomorphic functions converging in $L^1_{\rm loc}$ is itself holomorphic. It follows that if 
$q$ is holomorphic on $U$, then $f_*q$ is holomorphic on $V$. 

\subsection{The Contraction Principle}

\begin{proposition}\label{prop:contraction}
Let $f:U\to V$ be a covering map and let $q$ be a holomorphic integrable quadratic differential on $U$. 
Then, $\|f_*q\|_{L^1(V)}\leq \|q\|_{L^1(U)}$ and equality holds if and only if either $q=0$, or the degree of $f$ is finite and $f^*(f_*q)=\deg (f)\cdot q$. 
\end{proposition}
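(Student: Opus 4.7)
The plan is to derive the inequality from the pointwise triangle inequality applied to the defining series for $f_*q$, and then pin down the equality case by combining the rigidity of ``phase alignment'' for holomorphic functions with a group-theoretic summability argument on the deck group of $f$.

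Fix a topological disk $V'\subset V$ over which the inverse branches $g_i:V'\to U$, $i\in I$, of $f$ are well defined (where $|I|=\deg f$, possibly infinite). In a chart $\zeta:V'\to\C$, write $g_i^*q = \phi_i\,(\d\zeta)^2$, so that $f_*q = \bigl(\sum_i \phi_i\bigr)(\d\zeta)^2$ and the triangle inequality gives $|f_*q|\le \sum_i |\phi_i|\cdot ({\rm i}/2)\,\d\zeta\wedge \d\bar\zeta$ in the $L^1_{\rm loc}$ sense. Integrating over $V'$ and covering $V$ by countably many such disks yields
\[\|f_*q\|_{L^1(V)}\le \sum_i \|g_i^*q\|_{L^1(V')} = \|q\|_{L^1(U)},\]
since the sheets $g_i(V')$ partition $f^{-1}(V')$ and each $g_i$ is an $L^1$-isometry.

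Assume now that equality holds and $q\neq 0$. Then $\bigl|\sum_i\phi_i\bigr| = \sum_i|\phi_i|$ almost everywhere on every chart $V'$. The standard holomorphic rigidity (if $|\phi+\psi|=|\phi|+|\psi|$ on an open set then $\phi/\psi$ is a non-negative real-valued holomorphic function, hence constant) gives, in each chart, a holomorphic $\phi$ and non-negative constants $c_i$ with $\phi_i = c_i\phi$. In particular, for any two inverse branches, $g_i^*q$ and $g_j^*q$ are positive real multiples of each other; composing via the local biholomorphism $g_j$ and the deck transformation $\sigma:=g_i\circ g_j^{-1}$ yields $\sigma^*q = (c_i/c_j)\cdot q$ on $g_j(V')$, and this relation extends globally on $U$ by analytic continuation. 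Since the $\sigma$'s so obtained exhaust the deck group $\Gamma$, we obtain a homomorphism $\mu:\Gamma\to\R_{>0}$, $\sigma\mapsto \mu_\sigma$, with $\sigma^*q=\mu_\sigma\cdot q$ on $U$.

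For any fundamental domain $W$ of $\Gamma$, change of variables gives $\|q\|_{L^1(\sigma W)} = \mu_\sigma\|q\|_{L^1(W)}$, hence
\[\|q\|_{L^1(U)} = \|q\|_{L^1(W)}\cdot \sum_{\sigma\in\Gamma}\mu_\sigma.\]
Since $\|q\|_{L^1(U)}<\infty$ and $\|q\|_{L^1(W)}>0$, the series converges. But left-translation by any $\tau\in\Gamma$ reindexes the sum and shows $\mu_\tau\sum_\sigma \mu_\sigma = \sum_\sigma \mu_\sigma$, so $\mu\equiv 1$ and hence $|\Gamma|<\infty$; that is, $\deg f$ is finite. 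With $\mu\equiv 1$, $q$ is $\Gamma$-invariant and descends as $q = f^*\tilde q$ with $\tilde q := f_*q/\deg f$, which rearranges to $f^*(f_*q)=\deg(f)\cdot q$. Conversely, if $q=0$ the conclusion is trivial, and if $\deg f<\infty$ and $f^*(f_*q)=\deg(f)\cdot q$, then $q=f^*(f_*q/\deg f)$ and both $L^1$ norms equal $\deg(f)\cdot \|f_*q/\deg f\|_{L^1(V)}$. The main obstacle I foresee is Step~3, namely promoting the pointwise phase alignment of the densities $\phi_i$ to the global scalar relation $\sigma^*q=\mu_\sigma\cdot q$ between two a priori independent holomorphic quadratic differentials on $U$; once that rigidity is in hand, ruling out infinite $\Gamma$ via the summability of a homomorphism to $\R_{>0}$ is purely formal.
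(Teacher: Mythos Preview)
Your treatment of the inequality is correct and matches the paper's: both arguments localize to evenly covered disks $V'$, write $f_*q=\sum_i g_i^*q$, and apply the triangle inequality sheet by sheet.

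The gap is in your equality analysis, specifically in the passage through the deck group. You define $\sigma:=g_i\circ g_j^{-1}$ on $g_j(V')$ and then treat it as an element of the deck group $\Gamma$; but this local map extends to a \emph{global} deck transformation only when the covering $f:U\to V$ is normal (Galois). For a non-normal covering, the deck group can be strictly smaller than the fiber---indeed $\Gamma$ can be trivial while $\deg f$ is large or infinite---so the step ``$|\Gamma|<\infty$; that is, $\deg f$ is finite'' is a non sequitur, and the claim that $\Gamma$-invariance of $q$ makes $q$ descend along $f$ likewise fails. The analytic-continuation step you flagged as delicate is actually fine for the \emph{relation} $\sigma^*q=c\,q$; what breaks is the global single-valuedness of $\sigma$ itself.

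The paper's argument sidesteps the deck group entirely and is both shorter and general. From your own observation that $g_i^*q=c_i\,\phi\,(\d\zeta)^2$ on $V'$ with $c_i\ge 0$, summing gives $f_*q=(\sum_i c_i)\,\phi\,(\d\zeta)^2$, so $g_i^*q=\psi_i\, f_*q$ with $\psi_i:=c_i/\sum_j c_j\in[0,1]$ and $\sum_i\psi_i=1$. Pulling back by $f$ on each sheet defines a single function $\phi:U\to[0,1]$ with $q=\phi\cdot f^*(f_*q)$ (namely $\phi|_{g_i(V')}=\psi_i$). Since $q$ and $f^*(f_*q)$ are holomorphic and $\phi$ is real-valued, either $q=0$ or $\phi$ is a constant $c$; then $\sum_i c=1$ over each fiber forces $\deg f<\infty$ and $c=1/\deg f$, i.e.\ $f^*(f_*q)=\deg(f)\cdot q$. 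This is exactly the conclusion you want, obtained without any normality hypothesis.

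Your converse direction (the last two sentences) is fine.
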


\begin{proof}
The proof is an immediate application of the triangle inequality: for any topological disk $V'\subset V$, we have 
\[\int_{V'}|f_*q| = \int_{V'}\left|\sum g^*q\right|
\leq \int_{V'}\sum  |g^* q| = \sum \int_{V'} |g^* q| = \int_{f^{-1}(V')} |q|,
\]
where the sums range over the inverse branches $g:V'\to U$ of $f$. 
It follows that 
\[\int_V |f_*q|\leq \int_{f^{-1}(V)}|q| =  \int_{U}|q| \]
with equality if and only if for all inverse branches $g$ of $f$, we have $g^*q=\psi_g\ f_*q$ for some  function $\psi_g:V'\to [0,1]$ satisfying $\sum_g \psi_g=1$. Setting $\phi\bigl(g(y)\bigr):=\psi_g(y)$, we see that $q = \phi \ f^*(f_*q)$ for some function $\phi:U\to [0,1]$. Since $q$ and $f^*(f_* q)$ are holomorphic, either $q=0$, or the function $\phi$ is  constant, let us say equal to $c\in [0,1]$. Since  $\sum_g \psi_g=1$, we have that $\deg(f)\cdot c = 1$, which forces the degree of $f$ to be finite with $f^*(f_*q)=\deg (f)\cdot q$.
\end{proof}

%
%

\subsection{Pairing quadratic differentials and  vector fields\label{subsec:pairing}}

If $q$ is a quadratic differential on $U$ and $\vartheta$ is a vector field on $U$, we may consider the $1$-form $q\otimes \vartheta$ defined on $U$ by its action on vector fields $\tau$:
\[q\otimes\vartheta(\tau) = \frac{1}{4}\bigl(q(\vartheta+\tau)-q(\vartheta-\tau)\bigr).\]
Note that if $q = \phi\ (\d\zeta)^2$ and $\vartheta=\psi\ \d/\d\zeta$, then $ q\otimes \vartheta = \phi \psi \ \d\zeta.$

If $x\in U$, and if $\vartheta$ and $q$ are meromorphic on $U$, we set 
\[\left<q,\vartheta\right>_x:=\res(q\otimes \vartheta,x).\]
If $q$ has at worst a simple pole at $x$, then $\left<q,\vartheta\right>_x$ only depends on $\theta:=\vartheta(0)$, and we use the notation
\[\left<q,\theta\right>_x:=\left<q,\vartheta\right>_x.\]

\begin{lemma}
Let $U:=\widehat U\ssm \{x\}$ and $V:=\widehat V\ssm \{y\}$ be punctured disks, let
$f:U\to V$ be a covering map ramifying at $x$, let $q$ be a meromorphic quadratic differential on $\widehat U$ and let $\vartheta$ be a meromorphic vector field on $U$.
Then 
\[ \left<f_* q, \vartheta\right>_y = \left< q,f^*\vartheta\right>_x.\]
\end{lemma}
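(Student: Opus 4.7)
The plan is to reduce the identity to a single change of variables by expressing both residues as contour integrals and using that $f$ is a $d$-to-$1$ covering of punctured disks near $x$, where $d := \deg_x f$.

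First, I choose a small positively oriented loop $\gamma$ around $y$ in $V$, small enough that $f^{-1}(\gamma)$ stays in a small punctured neighborhood of $x$ in $U$. By definition of the pairing,
\[\left<f_*q,\vartheta\right>_y = \frac{1}{2\pi i}\oint_\gamma f_*q\otimes\vartheta \quad \text{and} \quad \left<q,f^*\vartheta\right>_x = \frac{1}{2\pi i}\oint_{f^{-1}(\gamma)} q\otimes f^*\vartheta,\]
where in the second integral $f^{-1}(\gamma)$ is understood as the concatenated 1-cycle described below.

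Second, I would use the following algebraic identity: for any local inverse branch $g:V' \to U$ of $f$, the relation $f\circ g = \id$ gives $g^*f^*\vartheta = \vartheta$, hence
\[g^*q\otimes \vartheta = g^*q \otimes g^*f^*\vartheta = g^*(q\otimes f^*\vartheta).\]
I would then cover $\gamma$ by finitely many simply connected open sets $V_j'\subset V$ on each of which $f$ admits $d$ inverse branches $g_{j,1},\ldots,g_{j,d}$, and decompose $\gamma$ into matching arcs $\gamma_j \subset V_j'$. Expanding the pushforward and applying the identity on each arc,
\[\oint_\gamma f_*q\otimes\vartheta = \sum_j \int_{\gamma_j}\sum_{k=1}^d g_{j,k}^*q\otimes\vartheta = \sum_{j,k}\int_{g_{j,k}(\gamma_j)} q\otimes f^*\vartheta = \oint_{f^{-1}(\gamma)} q\otimes f^*\vartheta,\]
the last step because the arcs $g_{j,k}(\gamma_j)$ concatenate, via the monodromy of $f$, into a single positively oriented loop of winding number one around $x$. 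Comparing the two contour integrals yields the lemma.

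The only nontrivial point is the geometric final step: verifying that $f^{-1}(\gamma)$ really is one positively oriented loop around $x$. In the local model $f(z) = z^d$, if $\gamma(t) = \eps e^{2\pi i t}$ for $t\in [0,1]$, then the inverse branches parametrize $d$ consecutive arcs each of angular extent $1/d$ on the circle $|z| = \eps^{1/d}$, and these concatenate into the single loop $s\mapsto \eps^{1/d} e^{2\pi i s}$, $s\in [0,1]$; so the $d$-fold multiplicity of $f$ on $f^{-1}(\gamma)$ is exactly compensated by the $1/d$ contraction of each arc. If one wishes to allow infinite-degree coverings (for which $f_*q$ is only defined when $q$ is integrable on $U$), the same argument adapts since the defining series for $f_*q$ converges in $L^1_{\rm loc}$ and Fubini justifies the interchange of sum and integral.
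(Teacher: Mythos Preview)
Your proposal is correct and takes essentially the same approach as the paper: express both sides as contour integrals, expand $f_*q$ in terms of inverse branches, and use $g^*(q\otimes f^*\vartheta)=g^*q\otimes\vartheta$ together with a change of variables so that the pieces reassemble into $\int_{f^{-1}(\gamma)} q\otimes f^*\vartheta$. The only cosmetic difference is that the paper cuts $\gamma$ at a single basepoint $a$ (so that the $d$ inverse branches are defined globally on the arc $\gamma\ssm\{a\}$), avoiding your cover of $\gamma$ by several simply connected patches.
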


\begin{proof}
Let $\gamma\subset V$ be a loop around $y$ with basepoint $a$. Then 
\[
\int_\gamma (f_* q) \otimes \vartheta=\sum_g \int_{\gamma\ssm\{a\}} (g^*q)\otimes \vartheta=\sum_g\int_{g(\gamma\ssm\{a\})} q \otimes f^*\vartheta= \int_{f^{-1}(\gamma)} q\otimes f^*\vartheta,
\]
where the sum ranges over the inverse branches $g$ of $f$ defined on $\gamma\ssm\{a\}$. 
\end{proof}

\end{document}